\newcolumntype{L}[1]{>{\raggedright\let\newline\\\arraybackslash\hspace{0pt}}m{#1}}
\newcolumntype{C}[1]{>{\centering\let\newline\\\arraybackslash\hspace{0pt}}m{#1}}
\newcolumntype{R}[1]{>{\raggedleft\let\newline\\\arraybackslash\hspace{0pt}}m{#1}}
\numberwithin{equation}{section}
      \theoremstyle{plain}
      \newtheorem{assumption}{Assumption}
\newtheorem{theorem}{Theorem}[section]
\newtheorem{remark}[theorem]{Remark}
\newtheorem{lemma}[theorem]{Lemma}
\begin{document}
	
	\title[Nyström Subsampling for FLR model]{Convergence Analysis of regularised Nyström method for Functional Linear Regression}
\author[N. Gupta]{Naveen Gupta}
\address[N. Gupta]{Department of Mathematics, Indian Institute of Technology Delhi, India}
\email{ngupta.maths@gmail.com}
\author{S. Sivananthan}
\address[S. Sivananthan]{Department of Mathematics, Indian Institute of Technology Delhi, India}
\email{siva@maths.iitd.ac.in}
 
\begin{abstract}
The functional linear regression model has been widely studied and utilized for dealing with functional predictors. In this paper, we study the Nyström subsampling method, a strategy used to tackle the computational complexities inherent in big data analytics, especially within the domain of functional linear regression model in the framework of reproducing kernel Hilbert space. 

By adopting a Nyström subsampling strategy, our aim is to mitigate the computational overhead associated with kernel methods, which often struggle to scale gracefully with dataset size. Specifically, we investigate a regularization-based approach combined with Nyström subsampling for functional linear regression model, effectively reducing the computational complexity from $O(n^3)$ to $O(m^2 n)$, where $n$ represents the size of the observed empirical dataset and $m$ is the size of subsampled dataset. Notably, we establish that these methodologies will achieve optimal convergence rates, provided that the subsampling level is appropriately selected. We have also demonstrated a numerical example of Nyström subsampling in the RKHS framework for the functional linear regression model.
\end{abstract}

\keywords{Functional Linear regression, reproducing kernel Hilbert space, Nyström subsampling, regularization, Covariance operator}
\maketitle

\section{Introduction}
Functional Linear Regression (FLR) model, introduced by Ramsay and Dalzell \cite{ramsay1991some}, has been one of the main tool of functional data analysis which deals with the data given in the form of curves \cite{ ramsaywhendataarefunctions,ramsay2002afda,morris2015functional,kokoszka2017}. In the literature, two main techniques have been utilised to solve the FLR model. One is functional principal component analysis \cite{cardot1999, muller2005generalized, cai2006prediction}, which relies on representing the target function with respect to a basis. For example, a B-spline basis as in \cite{cardot2003spline}, or more popularly the eigenfunctions of the covariance operator \cite{cai2006prediction, hall2007methodology}. Another approach for the FLR model that gained popularity is the method of regularization in a reproducing kernel Hilbert space (RKHS), i.e., an estimator of the target function is constructed by restricting it to an RKHS \cite{ARKHSFORFLR, tonyyuan2012minimax, ZhangFaster2020, balasubramanian2024functional, gupta2024optimal}. As this method needs to solve a matrix equation with the kernel matrix of size $n \times n$, the computational cost of this method is of $O(n^3)$. Though this method is simple and has strong empirical performance, its high computational cost makes it inefficient for large dataset. To overcome this issue, we propose to utilise the Nyström subsampling method for the FLR model to reduce the computational complexity of the RKHS-based method. The pivotal step of the Nyström method involves approximating a large matrix by employing a smaller matrix obtained by randomly subsampling columns of the original matrix. Theoretical study of the Nyström approach within the classical learning theory framework has been conducted in various works \cite{bach2013sharp, sergei2013,  rudi2015less, rudi2017falkon, sergiyjrnystrom, sergiyjrgeneralsmoothness, sergiylowsmoothness, KCGMwithrandomprojection}. Other than Nyström method, there has been a number of techniques in the literature to deal with the computational complexity issues such as Distributed learning \cite{zhou2017distributed,tong2022distributed, liu2024statistical}, greedy method \cite{smola2000sparse} and so on. \\

FLR model extends the classical linear regression to functional data, where predictors and/or responses are functions rather than scalar values. We put our interest in the model where predictors are functions and responses are scalar values. Mathematically, the FLR model with scalar responses can be delineated as follows:
Consider a real-valued random variable denoted as $Y(\omega)$, and let $(X(\omega,t); t\in S , \omega \in \Omega)$ represent a square-integrable continuous-time process defined on the same space. In this context, the FLR model is given as follows:
\begin{equation}
\label{model}
    Y = \int_{S} X(t) \beta^*(t)\, dt + \epsilon.
\end{equation}
Here, $\epsilon$ denotes a zero-mean random noise that is independent of $X$, possessing a finite variance $\sigma^2$, $\beta^*$ represents an unknown square integrable slope function and $S$ denotes a compact subset of $\mathbb{R}^d$. We also assume that $\mathbb{E}[\|X\|_{L^2}^2]$ is finite. 

It can be easily obtained from $(\ref{model})$ that for every $s \in S$, we have $(C\beta^*)(s) = \mathbb{E}[XY](s)$, where $C:=\mathbb{E}[X\otimes X]$ is the covariance operator. Alternatively, the slope function can be seen as: 
\begin{equation*}
    \label{truesolution}
    \beta^* := \arg\min_{\beta \in L^2(S)}\mathbb{E}[Y-\langle X,\beta \rangle_{L^2} ]^2.
\end{equation*}
As the slope function $\beta^*$ is not known, the aim is to construct an estimator $\hat{\beta}$  that approximates the slope function $\beta^*$ using the observed empirical data $\{(X_1, Y_1),(X_2, Y_2),\cdots, (X_n, Y_n)\}$. Here, the $X_i$'s denote independent and identically distributed (i.i.d.) samples of the random process $X(\omega,\cdot)$, and the $Y_{i}$'s represent scalar responses.

A recent study of kernel methods to solve the FLR model has been performed in \cite{balasubramanian2024functional}. It can be seen that practical use of the kernel method requires to inverse of an $n \times n$ size matrix, which has a complexity of $O(n^3)$. As a result, applying kernel methods becomes challenging when the dataset is large. Now the question arises whether can we use the Nyström approach to attain a complexity that is less than cubic in data size $n$ without sacrificing our rates of learning. 

To answer this question, we demonstrate how the regularized Nyström subsampling can be applied in the context of the FLR model. We derive the convergence rates for regularized Nyström method assuming that $\beta^* \in \mathcal{R}(T^{\frac{1}{2}}(T^{\frac{1}{2}}CT^{\frac{1}{2}})^s)$, where $T$ is the kernel integral operator and $\mathcal{R}(A)$ denotes range of an operator $A$. This assumption of smoothness aligns with standard practices in analyzing kernel methods for the FLR model \cite{ZhangFaster2020, balasubramanian2024functional}.

\subsection{Related work} The Nyström method is a well-established technique for approximating large kernel matrices in machine learning applications such as Support Vector Machines and Gaussian Processes. Williams and Seeger \cite{williams2000using} applied it to enhance the computational efficiency of Gram matrix approximations. Various sampling strategies for Nyström subsampling have been explored in \cite{drineas2005nys, sanjiv2012samplingmethod, rudi2015less, gittens2016nys}. Theoretical insights into Nyström subsampling within the framework of classical learning theory have been presented in \cite{rudi2015less, sergiyjrnystrom, sergiyjrgeneralsmoothness, sergiylowsmoothness}. Recent advancements in the method include its use in coefficient-based regularized regression \cite{coefficient2019based} and pairwise learning problems \cite{pairwise2023}.

\subsection{Contributions}
\begin{itemize}[label={}]
    \item \textbf{Computational efficiency:} In the RKHS framework, we propose an estimator for the FLR model based on regularised Nyström subsampling method to reduce the computational complexity of kernel method from $O(n^3)$ to $O(m^2n)$, where $n$ is the size of the original data and $m$ is the size of the subsampled data.\vspace{1.5mm}

    \item \textbf{Optimal rates:} Under the smoothness assumption $\beta^* \in \mathcal{R}(T^{\frac{1}{2}}(T^{\frac{1}{2}}CT^{\frac{1}{2}})^s)~0\leq s \leq \frac{1}{2}$, we provide convergence rates of Nyström method for estimation and prediction error in Theorem \ref{mainresult} which matches with the existing optimal rates (See Remark~\ref{combinedrmk}).
\end{itemize}



\noindent 
The structure of this paper unfolds as follows: In Section \ref{preliminaries}, we explore the framework of RKHS within the domain of the functional linear regression model. Here, we present the estimator of the slope function tailored for the Nyström method in the context of FLR. In section \ref{mr}, we provide the necessary assumptions that are crucial to our research. Next, we state the major outcomes and their proofs. Section \ref{supplements} provides essential results that serve as stepping stones for the proof of the main findings. Finally in section \ref{numerical}, we demonstrate a numerical example of Nyström subsampling method in the RKHS framework for the FLR model. 

\subsection{Notations} $L^2(S)$ denotes the space of all real-valued square-integrable functions defined on $S$. For $f, g \in L^2(S)$, $L^2$ inner product and norm are defined as $\langle f, g \rangle_{L^2} = \int_{S}f(x)g(x)\, dx$ and $\|f\|^2_{L^2}= \langle f, f \rangle_{L^2}$. The inner product and norm associated with the RKHS, $\mathcal{H}$ are denoted as $\langle \cdot, \cdot \rangle_{\mathcal{H}}$ and $\|\cdot\|_{\mathcal{H}}$, respectively. For two positive numbers $a$ and $b$, $a \lesssim b$ means $a \leq cb $ for some positive constant $c$. For a random variable $W$ with law $P$ and a constant $b$, $W\lesssim_p b$ denotes that for
any $\delta > 0$, there exists a positive constant $c_\delta<\infty$  such that $P(W\le c_\delta b)\ge \delta$. For notational convenience, we define $A := J^*CJ, \hat{A}_{n}:= J^*\hat{C}_{n}J$, $\Lambda:= T^{\frac{1}{2}}CT^{\frac{1}{2}}$ and $\hat{\Lambda}_{n}:= T^{\frac{1}{2}}\hat{C}_{n}T^{\frac{1}{2}}$, where $\hat{C}_n$ is an empirical estimator of $C$ (see Section~\ref{preliminaries} for the definition of $\hat{C}_{n}$). $I$ and $I_{t}$ are identity operators on spaces $\mathcal{H}$ and $\mathbb{R}^{t}$ respectively. For an operator $B: \mathcal{H}_1 \to \mathcal{H}_2$, $\|B\|_{\mathcal{H}_1 \to \mathcal{H}_2}$ represents the operator norm where $\mathcal{H}_1$ and $\mathcal{H}_2$ are two Hilbert spaces.

\section{Preliminaries}
\label{preliminaries}
An RKHS is a Hilbert space $\mathcal{H}$ of real-valued functions on $S$, for which the point-wise evaluation map $(f \to f(x))$ is linear and continuous for each $x \in S$. Associated to each RKHS, we have a reproducing kernel $k$ which is a symmetric and positive definite function from $S\times S$ to $\mathbb{R}$ such that $k(s,\cdot) \in \mathcal{H}$ and $f(s) = \langle k(s,\cdot),f \rangle_{\mathcal{H}}, \,\, \forall f \in \mathcal{H}$. We assume that $k$ is measurable and $\sup_{x\in S}k(x,x)\leq \kappa $, where $\kappa $ is a positive constant. Then the elements in the associated RKHS $\mathcal{H}$ are measurable and bounded functions on $S$. For a more comprehensive exploration of RKHS, we refer the reader to \cite{aronszajn1950rkhs, SVM2008steinwart, paulsen2016rkhs, AI2022sergei}.

The goal of our analysis is to construct an estimator for the unknown slope function by utilizing the observed information in the form of empirical data points $\{(X_{i}, Y_{i})\}_{i=1}^{n}$. For regularization parameter $\lambda>0$, the RKHS-based linear least square estimator is given by solving a minimization problem over the RKHS $\mathcal{H}$:
\begin{equation}
\label{estimatorequation}
    \begin{split}
        \hat{\beta}_{n} & = \arg\min_{\beta \in \mathcal{H}}\frac{1}{n}\sum_{i=1}^n[Y_i-\langle \beta, X_i \rangle_{L^2} ]^2 + \lambda \|\beta\|_{\mathcal{H}}^2 \\
        & = \arg\min_{\beta \in \mathcal{H}}\frac{1}{n}\sum_{i=1}^n[Y_i-\langle J\beta, X_i \rangle_{L^2} ]^2 + \lambda \|\beta\|_{\mathcal{H}}^2\\
        & = \arg\min_{\beta \in \mathcal{H}}\frac{1}{n}\sum_{i=1}^n[Y_i-\langle \beta, J^*X_i \rangle_{L^2} ]^2 + \lambda \|\beta\|_{\mathcal{H}}^2.\\
    \end{split}
\end{equation}
Here $J$ is an inclusion operator which embeds the RKHS $\mathcal{H}$ into the space of square integrable functions $L^2$, $f \mapsto \langle k(\cdot, \cdot), f \rangle_{\mathcal{H}} $\cite{cuckerzhou2007learningtheory}. We define the integral operator $T:= JJ^* : L^2(S) \to L^2(S)$, where $J^*: L^2(S) \mapsto \mathcal{H}$ given as:
$$J^*f = \int_{S}f(s)k(s,\cdot)ds ,\quad x \in S $$ is the adjoint of the inclusion operator.\\
It is easy to verify that the solution of the optimization equation $(\ref{estimatorequation})$ can be given by solving the operator equation
\begin{equation}
    \label{estimator}
   (J^*\hat{C}_{n}J + \lambda I)\hat{\beta}_{n} = J^*\hat{R},
\end{equation}
where $$\hat{C}_{n} := \frac{1}{n}\sum_{i=1}^{n} X_{i} \otimes X_{i} \quad \text{ and } \quad \hat{R} := \frac{1}{n} \sum_{i=1}^{n} Y_{i}X_{i}. $$
Here $\otimes$ is $L^2$ tensor product defined as $f \otimes g (\cdot) = \langle f, \cdot\rangle_{L^2} g$. Without regularization, equation $(\ref{estimator})$ can be seen as a discretized form of $J^*CJ\beta^* = J^*\mathbb{E}[XY]$, which is an ill-conditioned inverse problem. Regularization techniques have been well explored in the learning theory framework to solve such ill-posed problems \cite{sergeionregularization, lin2020optimalspectral, smale2002foundation}. A comprehensive investigation of the Tikhonov regularization method for solving equation $(\ref{estimator})$ has been conducted in the following references \cite{tonyyuan2012minimax, HTONGFLR, TONGHUBER, ZhangFaster2020,balasubramanian2024functional}.\\
The representer theorem \cite{smale2002foundation, generaliserepresenter} ensures that the solution of the equation $(\ref{estimatorequation})$ can be written in the linear span of $\{\int_{S}k(\cdot, t)X_{i}(t)dt: 1\leq i \leq n\}$, i.e., there exists a vector $\textbf{a} = (a_{1},\cdots, a_{n})^{\top} \in \mathbb{R}^n$ such that 
$$\hat{\beta}_{n} = \sum_{i=1}^{n} a_{i} \int_{S}k(\cdot, t)X_{i}(t)dt. $$ Using this representation of $\hat{\beta}_{n}$ in $(\ref{estimatorequation})$, we get that 
\begin{equation}
\label{reprtforkernel}
\textbf{a} = (\textbf{K}+\lambda I)^{-1}Y,
\end{equation}
where $\textbf{K}$ is a matrix of size $n \times n$ with entries $[\textbf{K}]_{ij} = \int_{S}\int_{S} k(s,t) X_{i}(s) X_{j}(t) ds dt $ and $Y = (Y_1, Y_2, \cdots, Y_n)^{\top} \in \mathbb{R}^n$.\\ 
It is evident that calculating vector $\textbf{a}$ will require at least $O(n^3)$ operations, and when our dataset is substantial, the kernel method encounters this memory constraint. This limitation becomes particularly pronounced as the size of the dataset increases, making it challenging to process and analyze large volumes of data efficiently. Thus, alternative approaches, such as subsampling methods, become increasingly attractive for managing computational complexities associated with extensive datasets. Next, we provide the representation of the estimator for the slope function using the Nyström subsampling method in the RKHS framework.

\subsection{Nyström subsampling estimator}
The Nyström subsampling method can be seen as a restriction of the minimization problem $(\ref{estimatorequation})$ to a smaller space $\mathcal{H}_{m}$ defined as:
\begin{equation*}
    \mathcal{H}_{m} := \textit{span}\left\{\int_{S}k(\cdot, t)\tilde{X}_{i}(t)dt: 1\leq i \leq m\right\},
\end{equation*}
where $\{\tilde{X}_{1}, \tilde{X}_{2}, \cdots, \tilde{X}_{m}\}$ is a uniformly subsampled dataset from the original dataset  $\{X_{1}, X_{2}, \cdots, X_{n}\}$.
So for the Nystöm approach, the estimator for the slope function is given as:
\begin{equation}
\label{optimization over subsampled space}
    \begin{split}
        \hat{\beta}_{m}: & = \arg\min_{\beta \in \mathcal{H}_{m}}\frac{1}{n}\sum_{i=1}^n[Y_i-\langle \beta, X_i \rangle_{L^2} ]^2 + \lambda \|\beta\|_{\mathcal{H}}^2 .
    \end{split}
\end{equation}
The principal task of the Nyström subsampling method is to replace the matrix $\textbf{K}$ with a low-rank matrix, which is obtained by random subsampling of columns of matrix $\mathbf{K}$.
Let us define $P_{m}$ as a projection operator from $\mathcal{H}$ and with the range $\mathcal{H}_{m}$.

The representer theorem ensures that their exists a vector $\tilde{\textbf{a}} = (\tilde{a}_{1}, \cdots, \tilde{a}_{m})^{\top} \in \mathbb{R}^m$ such that $\hat{\beta}_{m} = \sum_{i=1}^{m} \tilde{a}_{i} \int_{S} k(t, \cdot) \tilde{X}_{i}(t) dt$.
Putting things back to $(\ref{optimization over subsampled space})$ and solving for $\tilde{\textbf{a}}$ gives us
$$(\tilde{a}_{1}, \cdots, \tilde{a}_{m})^{\top} = \tilde{\textbf{a}} = (\textbf{K}_{n m}^{\top}\textbf{K}_{nm} + \lambda n \textbf{K}_{mm})^{-1}\textbf{K}_{nm}^{\top}Y,$$
where $\textbf{K}_{n m}$  and $\textbf{K}_{mm}$ are  matrices of size $n \times m$ and $m \times m$ respectively with entries
$$ [\textbf{K}_{n m}]_{ij} = \int_{S}\int_{S} k(s,t) X_{i}(s) \tilde{X}_{j}(t) ds dt , \quad 1 \leq i \leq n \text{ and } 1 \leq j \leq m, $$
$$ [\textbf{K}_{m m}]_{ij} = \int_{S}\int_{S} k(s,t) \tilde{X}_{i}(s) \tilde{X}_{j}(t) ds dt , \quad 1 \leq i \leq m \text{ and } 1 \leq j \leq m, $$
$\textbf{K}_{n m}^{\top}$ is the transpose of the matrix $\textbf{K}_{n m}$
and $Y = (Y_1, Y_2, \cdots, Y_n)^{\top} \in \mathbb{R}^n$.\\

\noindent
As we take $m << n$, the cost to compute $\tilde{\textbf{a}}$ is $O(m^2 n)$ which is much less than $O(n^3)$ required to compute $\textbf{a} = (\textbf{K}+\lambda I)^{-1}Y$ in equation $(\ref{reprtforkernel})$. As the computational cost is reduced, the question that needs to be answered is whether the accuracy of the kernel method is compromised or not which will be answered by Theorem \ref{mainresult} in section \ref{mr}.\\

By observing that any element of the RKHS $\beta \in \mathcal{H}$ can be written as sum of two elements  $\beta_{1} \in \mathcal{H}_{m}$, $\beta_{2} \in \mathcal{H}_{m}^{\perp}$ and $P_{m}\beta_{2}$ = 0, we can see that the solution of $(\ref{optimization over subsampled space})$ can be given as:
\begin{equation}
\label{Nystromoptimization}
   \hat{\beta}_{m} =  \arg\min_{\beta \in \mathcal{H}}\frac{1}{n}\sum_{i=1}^n[Y_i-\langle P_{m}\beta, X_i \rangle_{L^2} ]^2 + \lambda \|\beta\|_{\mathcal{H}}^2 .
\end{equation}
The solution of the minimization equation in $(\ref{Nystromoptimization})$ can be given by solving the operator equation:
\begin{equation}
\label{operatoreq}
    (P_{m}J^*\hat{C}_{n}JP_{m} + \lambda I)\hat{\beta}_{m}= P_{m}J^*\hat{R}.
\end{equation}

Let $P_{m} = VV^*$ such that $V^*V = I_{t} : \mathbb{R}^t \to \mathbb{R}^t$, where $V$ is an operator from $\mathbb{R}^t$ to $\mathcal{H}$ and $V^*: \mathcal{H} \to \mathbb{R}^t$ is the adjoint of operator $V$. Here $t$ is the dimension of the space $\mathcal{H}_{m}$.\\
Thus by using this decomposition of projection $P_{m}$ in $(\ref{operatoreq})$, it is easy to deduce that
\begin{equation*}
    \hat{\beta}_{m}= V(V^*\hat{A}_{n}V+\lambda I_{t})^{-1}V^*J^*\hat{R}.
\end{equation*}

\section{Main Results}
\label{mr}
In this section, we present the convergence analysis of our algorithm. First, we begin with a list of assumptions required for the analysis.

\begin{assumption}\label{as:1}
There exists an $h \in L^2(S)$ such that $\beta^* = T^{\frac{1}{2}}(T^{\frac{1}{2}}CT^{\frac{1}{2}})^sh$, where $0 \leq s \leq \frac{1}{2}$, i.e., $\beta^* \in \mathcal{R}\left(T^{\frac{1}{2}}(T^{\frac{1}{2}}CT^{\frac{1}{2}})^s\right)$.
\end{assumption}
In this analysis, we employ the Hölder-type source condition to put smoothness on the target function. As we know $\beta^* \in \mathcal{R}(T^{\frac{1}{2}}) = \mathcal{H}$, the assertion $\beta^* \in \mathcal{R}\left(T^{\frac{1}{2}}(T^{\frac{1}{2}}CT^{\frac{1}{2}})^s\right)$ exhibits a higher degree of smoothness compared to assuming $\beta^* \in \mathcal{H}$. 

\begin{assumption}\label{as:2}
     For some $b>1$,
\begin{equation*}
    i^{-b}\lesssim \tau_{i} \lesssim i^{-b} \quad \forall ~ i \in \mathbb{N},
\end{equation*}
where $(\tau_{i},e_{i})_{i\in \mathbb{N}}$ is the eigenvalue-eigenvector pair of operator $T^{\frac{1}{2}}CT^{\frac{1}{2}}$.
\end{assumption}
In Assumption \ref{as:2}, we demand the decay of the eigenvalues of the operator $T^{\frac{1}{2}}CT^{\frac{1}{2}} = \Lambda$. This condition ensures the bound on the effective dimension of the operator $\Lambda$ defined as 
$\mathcal{N}(\lambda)= \text{trace}(\Lambda (\Lambda+\lambda I)^{-1})$.\\
A quick idea of proof can be seen as:
\begin{equation}
\label{effectivedimention}
    \mathcal{N}(\lambda)= \sum_{i \in \mathbb{N}}\frac{\tau_{i}}{\tau_{i}+\lambda} \lesssim \sum_{i\in \mathbb{N}}\frac{i^{-b}}{i^{-b}+\lambda} \lesssim \lambda^{-\frac{1}{b}},
\end{equation}
where the last step follows from Lemma \ref{seriessum}.
\begin{assumption}\label{as:3}
\emph{(i)} There exists a constant $d>0$ such that
\begin{equation*}
    \mathbb{E}[\langle X, f\rangle^4] \leq d \mathbb{E}[\langle X, f\rangle^2]^2, \quad \forall ~ f \in L^2(S).
\end{equation*}
$$\text{Or}$$
\emph{(ii)} $$\sup_{\omega \in \Omega}\|X(\cdot, \omega)\|_{L^2} < \infty.$$
\end{assumption}
These two hypotheses differ because they apply to different types of stochastic processes. A non-degenerate Gaussian process will satisfy part (i), due to its well-behaved moments, but not part (ii), which requires boundedness. Conversely, a bounded process will satisfy part (ii) but may fail to meet the conditions of part (i), as it might not have the required moment properties that hold for Gaussian processes.

\begin{theorem}
\label{mainresult}
Suppose $\beta^*$ satisfies Assumption \ref{as:1}, i.e., $\beta^* = T^{\frac{1}{2}}(T^{\frac{1}{2}}CT^{\frac{1}{2}})^sh$ for some $h \in L^2(S)$ and $s \in [0,\frac{1}{2}]$. Further assume that Assumption \ref{as:2} and Assumption \ref{as:3} hold.
Then, for $\lambda = n^{-\frac{b}{1+b+2sb}}$, with probability at least $1-\delta$, we have
\begin{enumerate}[label=(\alph*)]
    \item\label{a} $\|C^{\frac{1}{2}}(\beta^*-\hat{\beta}_{m})\|_{L^2} \lesssim_{p} n^{-\frac{b(1+2s)}{2(1+b+2sb)}};$
    \item\label{b} $\|\beta^*-\hat{\beta}_{m}\|_{\mathcal{H}} \lesssim_{p} n^{-\frac{bs}{1+b+2sb}}$,
\end{enumerate}
where size of subsampled data is chosen such that $\lambda \gtrsim m^{-\frac{1}{b}} \geq n^{-\frac{1}{b}}$.
\end{theorem}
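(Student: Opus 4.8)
The plan is to bound the error $\beta^* - \hat{\beta}_m$ by inserting an intermediate \emph{noise-free, subsampled} regularized solution, say $\beta_{m,\lambda} := V(V^*AV + \lambda I_t)^{-1}V^*J^*C\beta^*$ (the population analogue of the representation $\hat{\beta}_m = V(V^*\hat{A}_nV + \lambda I_t)^{-1}V^*J^*\hat{R}$), and then splitting
\[
\beta^* - \hat{\beta}_m = \underbrace{(\beta^* - \beta_{m,\lambda})}_{\text{approximation / bias}} + \underbrace{(\beta_{m,\lambda} - \hat{\beta}_m)}_{\text{sample + noise error}}.
\]
Both pieces should be controlled in the two norms $\|C^{1/2}\cdot\|_{L^2}$ (for (a)) and $\|\cdot\|_{\mathcal{H}}$ (for (b)) simultaneously, since the prediction norm is the ``weaker'' one and the same operator decompositions feed both. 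The key algebraic device throughout will be to conjugate everything by $T^{1/2}$ so that the relevant operator becomes $\Lambda = T^{1/2}CT^{1/2}$, whose spectral behaviour is pinned down by Assumption \ref{as:2} and hence by the effective-dimension bound $\mathcal{N}(\lambda) \lesssim \lambda^{-1/b}$ in \eqref{effectivedimention}; the source condition Assumption \ref{as:1} is then exactly the statement that $\beta^*$ has $\Lambda$-smoothness of order $s$, which converts into the factor $\lambda^{s}$ (or $\lambda^{s+1/2}$ in the prediction norm) in the bias term.

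For the bias term I would first handle the non-subsampled Tikhonov bias $\beta^* - \beta_\lambda$, where $\beta_\lambda := (A + \lambda I)^{-1}J^*C\beta^*$, using the standard identity $\beta^* - \beta_\lambda = \lambda(A+\lambda I)^{-1}\beta^*$ together with Assumption \ref{as:1} rewritten in terms of $A = J^*CJ$ and the operators $T^{1/2}, \Lambda$; this gives $\|C^{1/2}(\beta^*-\beta_\lambda)\|_{L^2}\lesssim \lambda^{(1+2s)/2}$ and $\|\beta^*-\beta_\lambda\|_{\mathcal{H}}\lesssim \lambda^{s}$. Then I would account for the additional error introduced by the projection $P_m = VV^*$: the gap $\beta_\lambda - \beta_{m,\lambda}$ is governed by $\|(I-P_m)\text{(something)}\|$, and here the hypothesis on the subsampling level, $\lambda \gtrsim m^{-1/b}$, is what guarantees that the projection of the reduced space is rich enough — this is the classical Nyström argument showing $(I-P_m)(A+\lambda I)^{-1/2}$ (or the analogous $T^{1/2}$-conjugated quantity) is $O(1)$ with high probability once $m$ is large relative to $\mathcal{N}(\lambda)$. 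I expect the available results in Section \ref{supplements} to supply exactly these projection estimates together with the concentration inequalities comparing $\hat{A}_n, \hat{C}_n, \hat{\Lambda}_n$ to their population counterparts (in the two regimes of Assumption \ref{as:3}(i) and (ii)).

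For the variance term $\beta_{m,\lambda} - \hat{\beta}_m$, I would write it as $V(V^*\hat{A}_nV + \lambda I_t)^{-1}V^*J^*(\hat{R} - \hat{C}_n\beta^*) + \text{(operator-perturbation terms)}$, use $\hat{R} - \hat{C}_n\beta^* = \frac1n\sum_i (Y_i - \langle X_i,\beta^*\rangle_{L^2})X_i$ which has mean zero and variance controlled by $\sigma^2$ and the effective dimension, and bound the noise contribution by a Bernstein-type inequality in Hilbert space, yielding the scale $\sqrt{\mathcal{N}(\lambda)/(n\lambda)}$ in the prediction norm and $\sqrt{\mathcal{N}(\lambda)}/(\sqrt{n}\,\lambda)$ in $\mathcal{H}$. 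The remaining operator-perturbation terms (replacing $\hat{A}_n$ by $A$, handling $(V^*\hat{A}_nV+\lambda I_t)^{-1}$ via the resolvent identity and the concentration of $\|(A+\lambda I)^{-1/2}(\hat{A}_n - A)(A+\lambda I)^{-1/2}\|$) are routine once one has the Section \ref{supplements} lemmas, provided $n\lambda^{1/b}$ is large — which holds for the stated $\lambda = n^{-b/(1+b+2sb)}$ since then $n\lambda^{1+1/b} \to \infty$. Finally, plugging $\lambda = n^{-b/(1+b+2sb)}$ and $\mathcal{N}(\lambda)\lesssim \lambda^{-1/b}$ into bias $+$ variance, one checks that the bias and variance are balanced and both equal $n^{-b(1+2s)/(2(1+b+2sb))}$ in the prediction norm and $n^{-bs/(1+b+2sb)}$ in the $\mathcal{H}$-norm, which is the claim. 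The main obstacle is the Nyström/projection step: making precise that uniform subsampling at level $m$ with $\lambda\gtrsim m^{-1/b}$ suffices to keep $\|(I-P_m)(A+\lambda I)^{-1/2}\|$ (and its $T^{1/2}$-conjugated version) bounded with high probability, and tracking this through both error norms — everything else is a reasonably standard regularization-plus-concentration bookkeeping exercise built on the supplementary lemmas.
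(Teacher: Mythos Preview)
Your overall strategy is sound and uses the same core ingredients as the paper (source condition $\Rightarrow$ bias of order $\lambda^{s}$ or $\lambda^{s+1/2}$, effective-dimension bound $\mathcal{N}(\lambda)\lesssim\lambda^{-1/b}$, Nystr\"om projection control under $\lambda\gtrsim m^{-1/b}$, and concentration of $\hat{A}_n,\hat{C}_n$ via the supplementary lemmas). The main structural difference is your choice of intermediate: you insert the \emph{population} Nystr\"om solution $\beta_{m,\lambda}=V(V^*AV+\lambda I_t)^{-1}V^*J^*C\beta^*$, whereas the paper never forms this object. Instead, the paper first peels off the noise at the empirical level (its Term-1, involving $J^*\hat{R}-J^*\hat{C}_n\beta^*$), and for the noiseless remainder passes through the \emph{empirical full} Tikhonov solution $(\hat{A}_n+\lambda I)^{-1}J^*\hat{C}_n\beta^*$, then the population full Tikhonov, and finally $\beta^*$. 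The advantage of the paper's ordering is that the projection error (empirical Nystr\"om vs.\ empirical full Tikhonov) collapses cleanly thanks to the algebraic identity $V(V^*\hat{A}_nV+\lambda I_t)^{-1}V^*(\hat{A}_n+\lambda I)VV^*=VV^*$, so that only $\|(I-P_m)(\hat{A}_n+\lambda I)^{1/2}\|$ enters; your route instead forces a direct resolvent comparison of $V^*\hat{A}_nV$ with $V^*AV$, which is doable but is one extra perturbation step.

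Two slips to correct. First, the relevant Nystr\"om bound is $\|(I-P_m)(A+\lambda I)^{1/2}\|\lesssim\sqrt{\lambda}$ (Lemma~\ref{projbound}), not that $(I-P_m)(A+\lambda I)^{-1/2}$ is $O(1)$, which is a triviality. Second, your stated variance scales are each off by a factor $\sqrt{\lambda}$: the correct bounds are $\sqrt{\mathcal{N}(\lambda)/n}$ for $\|C^{1/2}(\cdot)\|_{L^2}$ and $\sqrt{\mathcal{N}(\lambda)/(n\lambda)}$ for $\|\cdot\|_{\mathcal{H}}$ (cf.\ Lemma~\ref{Empirical bound} combined with Lemma~\ref{lessthan1}); with your stated $\sqrt{\mathcal{N}(\lambda)}/(\sqrt{n}\,\lambda)$ the $\mathcal{H}$-variance would actually diverge for $s<\tfrac12$ at the chosen $\lambda$. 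Once these are fixed, balancing at $\lambda=n^{-b/(1+b+2sb)}$ indeed gives the claimed rates.
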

\subsection{Proof of part \ref{a}}
\begin{proof}
    We start by considering the error term
    \begin{equation*}
    \begin{split}
        \|C^{\frac{1}{2}}(\beta^* - \hat{\beta}_{m})\|_{L^2} = &  \|C^{\frac{1}{2}}(J\hat{\beta}_{m}-\beta^*)\|_{L^2} \\
        = & \|C^{\frac{1}{2}}(JV(V^*\hat{A}_{n}V+\lambda I_{t})^{-1}V^*J^*\hat{R}-\beta^*)\|_{L^2} \\
         \leq &  \underbrace{\|C^{\frac{1}{2}}JV(V^*\hat{A}_{n}V+\lambda I_{t})^{-1}V^*(J^*\hat{R}-J^*\hat{C}_{n}\beta^*)\|_{L^2}}_{\textit{Term-1}} \\
        & + \underbrace{\|C^{\frac{1}{2}}JV(V^*\hat{A}_{n}V+\lambda I_{t})^{-1}V^*J^*\hat{C}_{n}\beta^*-C^{\frac{1}{2}}\beta^*\|_{L^2}}_{\textit{Term-2}}.
    \end{split}
    \end{equation*}
Estimation of \textit{Term-1}:
\begin{equation*}
    \begin{split}
        & \|C^{\frac{1}{2}}JV(V^*\hat{A}_{n}V+\lambda I_{t})^{-1}V^*(J^*\hat{R}-J^*\hat{C}_{n}\beta^*)\|_{L^2}\\ 
        = & \|A^{\frac{1}{2}}V(V^*\hat{A}_{n}V+\lambda I_{t})^{-1}V^*(J^*\hat{R}-J^*\hat{C}_{n}\beta^*)\|_{\mathcal{H}}\\
        \leq &  \|A^{\frac{1}{2}}V(V^*\hat{A}_{n}V+\lambda I_{t})^{-1}V^*(A+\lambda I)^{\frac{1}{2}}\|_{\mathcal{H} \to \mathcal{H}}\|(A+\lambda I)^{-\frac{1}{2}}(J^*\hat{R}-J^*\hat{C}_{n}\beta^*)\|_{\mathcal{H}}.
    \end{split}
\end{equation*}
Using Lemma \ref{powersbound} ensures that
\begin{equation*}
    \begin{split}
        & \|C^{\frac{1}{2}}JV(V^*\hat{A}_{n}V+\lambda I_{t})^{-1}V^*(J^*\hat{R}-J^*\hat{C}_{n}\beta^*)\|_{L^2}\\
        \lesssim_{p} & \|(\hat{A}_{n}+\lambda I)^{\frac{1}{2}}V(V^*\hat{A}_{n}V+\lambda I_{t})^{-1}V^*(\hat{A}_{n}+\lambda I)^{\frac{1}{2}}\|_{\mathcal{H} \to \mathcal{H}}\|(A+\lambda I)^{-\frac{1}{2}}(J^*\hat{R}-J^*\hat{C}_{n}\beta^*)\|_{\mathcal{H}}\\
        \leq & \|(A+\lambda I)^{-\frac{1}{2}}(J^*\hat{R}-J^*\hat{C}_{n}\beta^*)\|_{\mathcal{H}},
    \end{split}
\end{equation*}
where the last step follows from Lemma \ref{lessthan1}. Now from Lemma \ref{Empirical bound} and $(\ref{effectivedimention})$, we have
\begin{equation*}
\begin{split}
    \|C^{\frac{1}{2}}JV(V^*\hat{A}_{n}V+\lambda I_{t})^{-1}V^*(J^*\hat{R}-J^*\hat{C}_{n}\beta^*)\|_{L^2} \lesssim_{p} & \sqrt{\frac{ \sigma^2 \mathcal{N}(\lambda)}{n \delta}}
    \lesssim  \frac{\lambda^{-\frac{1}{2b}}}{\sqrt{n}}.
    \end{split}
\end{equation*}

\noindent
Estimation of \textit{Term-2}:
\begin{equation*}
    \begin{split}
        & \|C^{\frac{1}{2}}JV(V^*\hat{A}_{n}V+\lambda I_{t})^{-1}V^*J^*\hat{C}_{n}\beta^*-C^{\frac{1}{2}}\beta^*\|_{L^2}\\
        \leq & \underbrace{\|C^{\frac{1}{2}}J(V(V^*\hat{A}_{n}V+\lambda I_{t})^{-1}V^*-(\hat{A}_{n}+\lambda I)^{-1})J^*\hat{C}_{n}\beta^*\|_{L^2}}_{\textit{Term-2a}}\\
        & \qquad \qquad + \underbrace{\|C^{\frac{1}{2}}J(\hat{A}_{n}+\lambda I)^{-1}J^*\hat{C}_{n}\beta^*-C^{\frac{1}{2}}\beta^*\|_{L^2}}_{\textit{Term-2b}}.
    \end{split}
\end{equation*}
Estimation of \textit{Term-2a}:
\begin{equation}
\label{term2a}
    \begin{split}
        & \|C^{\frac{1}{2}}J(V(V^*\hat{A}_{n}V+\lambda I_{t})^{-1}V^*-(\hat{A}_{n}+\lambda I)^{-1})J^*\hat{C}_{n}\beta^*\|_{L^2} \\
        = & \|A^{\frac{1}{2}}(V(V^*\hat{A}_{n}V+\lambda I_{t})^{-1}V^*-(\hat{A}_{n}+\lambda I)^{-1})J^*\hat{C}_{n}T^{\frac{1}{2}}(T^{\frac{1}{2}}CT^{\frac{1}{2}})^s h\|_{\mathcal{H}} \\
        \lesssim & \|A^{\frac{1}{2}}(V(V^*\hat{A}_{n}V+\lambda I_{t})^{-1}V^*-(\hat{A}_{n}+\lambda I)^{-1})J^*\hat{C}_{n}T^{\frac{1}{2}}\Lambda^s \|_{L^2 \to \mathcal{H}} \\
        \leq & \|A^{\frac{1}{2}}(V(V^*\hat{A}_{n}V+\lambda I_{t})^{-1}V^*-(\hat{A}_{n}+\lambda I)^{-1})J^*\hat{C}_{n}T^{\frac{1}{2}}(\Lambda + \lambda I)^{\frac{1}{2}}\|_{L^2 \to \mathcal{H}}\\
        & \qquad \qquad \qquad  \times\|(\Lambda + \lambda I)^{-\frac{1}{2}}\Lambda^s\|_{L^2 \to L^2}\\
        \stackrel{(*)}{=} & \|A^{\frac{1}{2}}(V(V^*\hat{A}_{n}V+\lambda I_{t})^{-1}V^*(\hat{A}_{n}+\lambda I)-I)(\hat{A}_{n}+\lambda I)^{-1}J^*\hat{C}_{n}J(A + \lambda I)^{\frac{1}{2}}\|_{\mathcal{H} \to \mathcal{H}}\\
        & \qquad \qquad \qquad  \times\|(\Lambda + \lambda I)^{-\frac{1}{2}}\Lambda^s\|_{L^2 \to L^2}\\
        \stackrel{(\dag)}{\lesssim_{p}} & \|A^{\frac{1}{2}}[I-VV^*+V(V^*\hat{A}_{n}V+\lambda I_{t})^{-1}V^*(\hat{A}_{n}+\lambda I)(VV^*-I)](\hat{A}_{n}+\lambda I)^{\frac{1}{2}}\|_{\mathcal{H} \to \mathcal{H}}\\
        & \qquad \qquad \qquad \times \|(\Lambda + \lambda I)^{-\frac{1}{2}}\Lambda^s\|_{L^2 \to L^2}\\
        \leq & \left\{\|A^{\frac{1}{2}}V(V^*\hat{A}_{n}V+\lambda I_{t})^{-1}V^*(\hat{A}_{n}+\lambda I)(VV^*-I)(\hat{A}_{n}+\lambda I)^{\frac{1}{2}}\|_{\mathcal{H} \to \mathcal{H}}\right.\\
        & \left.+ \|A^{\frac{1}{2}}(I-VV^*)(\hat{A}_{n}+\lambda I)^{\frac{1}{2}}\|_{\mathcal{H} \to \mathcal{H}}\right\}\|(\Lambda + \lambda I)^{-\frac{1}{2}}\Lambda^s\|_{L^2 \to L^2}\\
        \leq & 2 \|(\hat{A}_{n}+\lambda I)^{\frac{1}{2}}(I-VV^*)(\hat{A}_{n}+\lambda I)^{\frac{1}{2}}\|_{\mathcal{H} \to \mathcal{H}}\|(\Lambda + \lambda I)^{-\frac{1}{2}}\Lambda^s\|_{L^2 \to L^2} \quad (\text{by Lemma \ref{lessthan1}})\\
        \lesssim_{p} &  \lambda \|(\Lambda + \lambda I)^{-\frac{1}{2}}\Lambda^s\|_{L^2 \to L^2},
    \end{split}
\end{equation}
where we have used the fact $\|AA^*\| = \|A^*A\| = \|A\|^2$ and Lemma \ref{J T} in $(*)$. Lemma \ref{powersbound} and the fact that $V(V^*\hat{A}_{n}V+\lambda I_{t})^{-1}V^*(\hat{A}_{n}+\lambda I)VV^* = VV^*$ has been used for $(\dag)$. The last step follows from Lemma \ref{projbound}.\\

\noindent
Now as $\|(\Lambda + \lambda I)^{-\frac{1}{2}}\Lambda^s\|_{L^2 \to L^2} = \|(\Lambda + \lambda I)^{-(\frac{1}{2}-s)}(\Lambda + \lambda I)^{-s}\Lambda^s\|_{L^2 \to L^2} \leq \lambda^{s-\frac{1}{2}}$, we have
\begin{equation*}
    \|C^{\frac{1}{2}}J(V(V^*\hat{A}_{n}V+\lambda I_{t})^{-1}V^*-(\hat{A}_{n}+\lambda I)^{-1})J^*\hat{C}_{n}\beta^*\|_{L^2} \lesssim_{p} \lambda^{s+\frac{1}{2}}.
\end{equation*}
\vspace{1cm}
\noindent
Estimation of \textit{Term-2b}: Using Lemma \ref{J T}, we see that
\begin{equation*}
    \begin{split}
        & \|C^{\frac{1}{2}}J(\hat{A}_{n}+\lambda I)^{-1}J^*\hat{C}_{n}\beta^*-C^{\frac{1}{2}}\beta^*\|_{L^2} \leq  \|C^{\frac{1}{2}}J(A+\lambda I)^{-1}J^*C\beta^*-C^{\frac{1}{2}}\beta^*\|_{L^2} \\
        & \qquad \qquad \qquad + \|C^{\frac{1}{2}}J(\hat{A}_{n}+\lambda I)^{-1}J^*\hat{C}_{n}\beta^*- C^{\frac{1}{2}}J(A+\lambda I)^{-1}J^*C\beta^*\|_{L^2}\\
        \leq & \underbrace{\|C^{\frac{1}{2}}T^{\frac{1}{2}}(\hat{\Lambda}_{n}+\lambda I)^{-1}T^{\frac{1}{2}}\hat{C}_{n}\beta^*- C^{\frac{1}{2}}T^{\frac{1}{2}}(\Lambda+\lambda I)^{-1}T^{\frac{1}{2}}C\beta^*\|_{L^2}}_{\textit{Term-2b(1)}}\\
        & \qquad \qquad \qquad + \underbrace{\|C^{\frac{1}{2}}T^{\frac{1}{2}}(\Lambda+\lambda I)^{-1}T^{\frac{1}{2}}C\beta^*-C^{\frac{1}{2}}\beta^*\|_{L^2}}_{\textit{Term-2b(2)}}.
    \end{split}
\end{equation*}

\noindent
Bound of \textit{Term-2b(1)}: Let $\tilde{\beta}:= (\Lambda+\lambda I)^{-1}T^{\frac{1}{2}}C\beta^*$.
\begin{equation}
\label{empirical diff}
    \begin{split}
        & \|C^{\frac{1}{2}}T^{\frac{1}{2}}(\hat{\Lambda}_{n}+\lambda I)^{-1}T^{\frac{1}{2}}\hat{C}_{n}\beta^*- C^{\frac{1}{2}}T^{\frac{1}{2}}(\Lambda+\lambda I)^{-1}T^{\frac{1}{2}}C\beta^*\|_{L^2}\\
        = & \|\Lambda^{\frac{1}{2}}(\hat{\Lambda}_{n}+\lambda I)^{-1}(T^{\frac{1}{2}}\hat{C}_{n}\beta^*- \hat{\Lambda}_{n}\tilde{\beta}-\lambda \tilde{\beta})\|_{L^2}\\
        \lesssim_{p} & \|(\hat{\Lambda}_{n}+\lambda I)^{-\frac{1}{2}}(T^{\frac{1}{2}}\hat{C}_{n}\beta^*- \hat{\Lambda}_{n}\tilde{\beta}-\lambda \tilde{\beta})\|_{L^2} \quad (\text{by Lemma } \ref{powersbound}) \\
        = & \|(\hat{\Lambda}_{n}+\lambda I)^{-\frac{1}{2}}(T^{\frac{1}{2}}\hat{C}_{n}\beta^*- \hat{\Lambda}_{n}\tilde{\beta}-T^{\frac{1}{2}}C\beta^*+\Lambda \tilde{\beta})\|_{L^2} \quad (\because \lambda \tilde{\beta} = T^{\frac{1}{2}}C\beta^* - \Lambda \tilde{\beta})\\
        = & \|(\hat{\Lambda}_{n}+\lambda I)^{-\frac{1}{2}}T^{\frac{1}{2}}(C-\hat{C}_{n})(T^{\frac{1}{2}}\tilde{\beta}-\beta^*)\|_{L^2}\\
        \stackrel{(*)}{=} & \lambda \|(\hat{\Lambda}_{n}+\lambda I)^{-\frac{1}{2}}(\Lambda-\hat{\Lambda}_{n})(\Lambda+\lambda I)^{-1}\Lambda^s h\|_{L^2}\\
        \lesssim & \lambda \|(\hat{\Lambda}_{n}+\lambda I)^{-\frac{1}{2}}(\Lambda-\hat{\Lambda}_{n})(\Lambda+\lambda I)^{-\frac{1}{2}}\|_{L^2 \to L^2} \|(\Lambda+\lambda I)^{-\frac{1}{2}}\Lambda^s\|_{L^2 \to L^2},
    \end{split}
\end{equation}
where $(*)$ follows from $T^{\frac{1}{2}}\tilde{\beta}-\beta^* = -\lambda T^{\frac{1}{2}}(\Lambda +\lambda I)^{-1}\Lambda^sh$.
Next by using Lemma $\ref{ncmdifference}$, we get
\begin{equation*}
    \|C^{\frac{1}{2}}T^{\frac{1}{2}}(\hat{\Lambda}_{n}+\lambda I)^{-1}T^{\frac{1}{2}}\hat{C}_{n}\beta^*- C^{\frac{1}{2}}T^{\frac{1}{2}}(\Lambda+\lambda I)^{-1}T^{\frac{1}{2}}C\beta^*\|_{L^2} \lesssim_{p} \lambda^{s+\frac{1}{2}} \frac{\lambda^{-\frac{1}{b}}}{\sqrt{n}}.
\end{equation*}
Bound of \textit{Term-2b(2)}:
\begin{equation*}
    \begin{split}
        & \|C^{\frac{1}{2}}T^{\frac{1}{2}}(\Lambda+\lambda I)^{-1}T^{\frac{1}{2}}C\beta^*-C^{\frac{1}{2}}\beta^*\|_{L^2}\\
        = & \|C^{\frac{1}{2}}T^{\frac{1}{2}}(\Lambda+\lambda I)^{-1}T^{\frac{1}{2}}CT^{\frac{1}{2}}\Lambda^sh-C^{\frac{1}{2}}T^{\frac{1}{2}}\Lambda^sh\|_{L^2} \\
        = & \|\Lambda^{\frac{1}{2}}((\Lambda+\lambda I)^{-1}\Lambda-I) \Lambda^sh\|_{L^2} \lesssim  \lambda^{s+\frac{1}{2}},
    \end{split}
\end{equation*}
where the last step is obtained by using the qualification property of the Tikhonov regularization. Combining all the estimates, we get
\begin{equation*}
    \|C^{\frac{1}{2}}(\beta^*-\hat{\beta}_{m})\|_{L^2} \lesssim_{p} \frac{\lambda^{-\frac{1}{2b}}}{\sqrt{n}} + \lambda^{s+\frac{1}{2}}.
\end{equation*}
So for $\lambda = n^{-\frac{b}{1+b+2sb}}$, we get
\begin{equation*}
    \|C^{\frac{1}{2}}(\beta^*-\hat{\beta}_{m})\|_{L^2} \lesssim_{p} n^{-\frac{b(1+2s)}{2(1+b+2sb)}}.
\end{equation*}
\end{proof}


\subsection{Proof of part \ref{b}}
\begin{proof}
\begin{equation*}
\begin{split}
    \|\beta^*-\hat{\beta}_{m}\|_{\mathcal{H}}  = &  \|V(V^*\hat{A}_{n}V +\lambda I_{t})^{-1}V^*J^*\hat{R}-\beta^*\|_{\mathcal{H}}\\
     \leq &  \underbrace{\|V(V^*\hat{A}_{n}V+\lambda I_{t})^{-1}V^*(J^*\hat{R}-J^*\hat{C}_{n}\beta^*)\|_{\mathcal{H}}}_{\textit{Term-1}} \\
        & + \underbrace{\|V(V^*\hat{A}_{n}V+\lambda I_{t})^{-1}V^*J^*\hat{C}_{n}\beta^*-\beta^*\|_{\mathcal{H}}}_{\textit{Term-2}}.
\end{split}
    \end{equation*}
\noindent
\textit{Bound for Term-1:}
\begin{equation*}
    \begin{split}
       & \|V(V^*\hat{A}_{n}V+\lambda I_{t})^{-1}V^*(J^*\hat{R}-J^*\hat{C}_{n}\beta^*)\|_{\mathcal{H}}\\
       \leq & \|(\hat{A}_{n}+\lambda I)^{-\frac{1}{2}}(\hat{A}_{n}+\lambda I)^{\frac{1}{2}}V(V^*\hat{A}_{n}V+\lambda I_{t})^{-1}V^*(\hat{A}_{n}+\lambda I)^{\frac{1}{2}}\|_{\mathcal{H} \to \mathcal{H}}\\
       & \times \|(\hat{A}_{n}+\lambda I)^{-\frac{1}{2}}(A+\lambda I)^{\frac{1}{2}}\|_{\mathcal{H} \to \mathcal{H}} \|(A+\lambda I)^{-\frac{1}{2}}(J^*\hat{R}-J^*\hat{C}_{n}\beta^*)\|_{\mathcal{H}}\\
       \lesssim_{p} & \frac{1}{\sqrt{\lambda}} \sqrt{\frac{\sigma^2 \mathcal{N}(\lambda )}{n \delta}},
    \end{split}
\end{equation*}
where last step follows from the combined use of Lemma \ref{lessthan1}, Lemma \ref{powersbound} and Lemma \ref{Empirical bound}.

\vspace{0.5cm}
\noindent
\textit{Bound for Term-2:}
\begin{equation*}
    \begin{split}
        & \|V(V^*\hat{A}_{n}V+\lambda I_{t})^{-1}V^*J^*\hat{C}_{n}\beta^*-\beta^*\|_{\mathcal{H}}\\
        \leq & \underbrace{\|(V(V^*\hat{A}_{n}V+\lambda I_{t})^{-1}V^*-(\hat{A}_{n}+\lambda I)^{-1})J^*\hat{C}_{n}\beta^*\|_{\mathcal{H}}}_{\textit{Term-2a}} + \underbrace{\|(\hat{A}_{n}+\lambda I)^{-1}J^*\hat{C}_{n}\beta^*-\beta^*\|_{\mathcal{H}}}_{\textit{Term-2b}}.
    \end{split}
\end{equation*}
\textit{Estimation of Term-2a:}
Following the similar steps of equation $(\ref{term2a})$, we get
\begin{equation*}
    \begin{split}
       & \|(V(V^*\hat{A}_{n}V+\lambda I_{t})^{-1}V^*-(\hat{A}_{n}+\lambda I)^{-1})J^*\hat{C}_{n}\beta^*\|_{\mathcal{H}}\\
       \lesssim & \|(V(V^*\hat{A}_{n}V+\lambda I_{t})^{-1}V^*-(\hat{A}_{n}+\lambda I)^{-1})J^*\hat{C}_{n}T^{\frac{1}{2}}(\Lambda + \lambda I)^{\frac{1}{2}}\|_{L^2 \to \mathcal{H}} \|(\Lambda + \lambda I)^{-\frac{1}{2}}\Lambda^s\|_{L^2 \to L^2}\\
       \lesssim_{p} & \|(\hat{A}_{n}+\lambda I)^{-\frac{1}{2}}((\hat{A}_{n}+\lambda I)^{\frac{1}{2}}V(V^*\hat{A}_{n}V+\lambda I_{t})^{-1}V^*(\hat{A}_{n}+\lambda I)^{\frac{1}{2}}-I)(\hat{A}_{n}+\lambda I)\|_{\mathcal{H} \to \mathcal{H}}\\
       & \qquad \qquad \qquad \times  \|(\Lambda + \lambda I)^{-\frac{1}{2}}\Lambda^s\|_{L^2 \to L^2}\\
        \leq & \lambda^{s-1}\|(\hat{A}_{n}+\lambda I)^{\frac{1}{2}}V(V^*\hat{A}_{n}V+\lambda I_{t})^{-1}V^*(\hat{A}_{n}+\lambda I) - (\hat{A}_{n}+\lambda I)^{\frac{1}{2}} ) (\hat{A}_{n}+\lambda I)^{\frac{1}{2}}\|_{\mathcal{H} \to \mathcal{H}}\\
        = & \lambda^{s-1} \|(\hat{A}_{n}+\lambda I)^{\frac{1}{2}}[I- VV^* +V(V^*\hat{A}_{n}V+\lambda I_{t})^{-1}V^*(\hat{A}_{n}+\lambda I)(VV^*-I)](\hat{A}_{n}+\lambda I)^{\frac{1}{2}}\|_{\mathcal{H} \to \mathcal{H}}\\
        \leq & \lambda^{s-1}\|(\hat{A}_{n}+\lambda I)^{\frac{1}{2}}V(V^*\hat{A}_{n}V+\lambda I_{t})^{-1}V^*(\hat{A}_{n}+\lambda I) (VV^* - I) (\hat{A}_{n}+\lambda I)^{\frac{1}{2}}\|_{\mathcal{H} \to \mathcal{H}}\\
        & + \lambda^{s-1}\|(\hat{A}_{n}+\lambda I)^{\frac{1}{2}}(I- VV^*)(\hat{A}_{n}+\lambda I)^{\frac{1}{2}}\|_{\mathcal{H} \to \mathcal{H}}\\
        \leq & \lambda^{s-1}\|(\hat{A}_{n}+\lambda I)^{\frac{1}{2}}(I- VV^*)(\hat{A}_{n}+\lambda I)^{\frac{1}{2}}\|_{\mathcal{H} \to \mathcal{H}} \lesssim_{p} \lambda^s,
    \end{split}
\end{equation*}
where the last step follows from Lemma \ref{lessthan1} and Lemma \ref{projbound}.\\

\noindent
\textit{Estimation of Term-2b:}
\begin{equation*}
    \begin{split}
       & \|(\hat{A}_{n}+\lambda I)^{-1}J^*\hat{C}_{n}\beta^*-\beta^*\|_{\mathcal{H}} = \|J(\hat{A}_{n}+\lambda I)^{-1}J^*\hat{C}_{n}\beta^*-\beta^*\|_{\mathcal{H}}\\
       \leq & \underbrace{\|J(A+\lambda I)^{-1}J^*C\beta^*-\beta^*\|_{\mathcal{H}}}_{Term-2b(1)} + \underbrace{\|J(\hat{A}_{n}+\lambda I)^{-1}J^*\hat{C}_{n}\beta^*- J(A+\lambda I)^{-1}J^*C\beta^*\|_{\mathcal{H}}}_{Term-2b(2)}. 
    \end{split}
\end{equation*}

\noindent
\textit{Bound for Term-2b(1):}
\begin{equation*}
    \begin{split}
        \|J(A+\lambda I)^{-1}J^*C\beta^*-\beta^*\|_{\mathcal{H}} \stackrel{(*)}{=} & \|T^{\frac{1}{2}}(\Lambda+\lambda I)^{-1}T^{\frac{1}{2}}C\beta^*-\beta^*\|_{\mathcal{H}}\\
        = & \|T^{\frac{1}{2}}((\Lambda+\lambda I)^{-1}T^{\frac{1}{2}}CT^{\frac{1}{2}}-I)\Lambda^sh\|_{\mathcal{H}}\\
        \stackrel{(\dag)}{\lesssim} & \lambda^s,
    \end{split}
\end{equation*}
where $(*)$ follows from Lemma \ref{J T} and for $(\dag)$, we have used qualification property of the Tikhonov regularization.\\

\noindent
\textit{Bound for Term-2b(2):} By Lemma \ref{J T}, we have
\begin{equation*}
    \begin{split}
        & \|J(\hat{A}_{n}+\lambda I)^{-1}J^*\hat{C}_{n}\beta^*- J(A+\lambda I)^{-1}J^*C\beta^*\|_{\mathcal{H}}\\
        &  \qquad \qquad = \|T^{\frac{1}{2}}(\hat{\Lambda}_{n}+\lambda I)^{-1}T^{\frac{1}{2}}\hat{C}_{n}\beta^*- T^{\frac{1}{2}}(\Lambda+\lambda I)^{-1}T^{\frac{1}{2}}C\beta^*\|_{\mathcal{H}}\\
        &  \qquad \qquad \leq \|(\hat{\Lambda}_{n}+\lambda I)^{-1}T^{\frac{1}{2}}\hat{C}_{n}\beta^*- (\Lambda+\lambda I)^{-1}T^{\frac{1}{2}}C\beta^*\|_{L^2}.
    \end{split}
\end{equation*}
Following the similar steps of equation $(\ref{empirical diff})$, we get
\begin{equation*}
\begin{split}
    & \|J(\hat{A}_{n}+\lambda I)^{-1}J^*\hat{C}_{n}\beta^*- J(A+\lambda I)^{-1}J^*C\beta^*\|_{\mathcal{H}}\\
\leq & \sqrt{\lambda} \|(\hat{\Lambda}_{n}+\lambda I)^{-\frac{1}{2}}(\Lambda-\hat{\Lambda}_{n})(\Lambda+\lambda I)^{-1}\Lambda^s h\|_{L^2}
    \lesssim_{p} \lambda^s \frac{\lambda^{-\frac{1}{b}}}{\sqrt{n}}.
\end{split}
\end{equation*}
Combining all the bounds, we conclude
\begin{equation*}
    \|\beta^*-\hat{\beta}_{m}\|_{\mathcal{H}} \lesssim_{p} \frac{\lambda^{-\frac{1}{2b}-\frac{1}{2}}}{\sqrt{n}} + \lambda^s.
\end{equation*}
So for $\lambda = n^{-\frac{b}{1+b+2sb}}$, we get
\begin{equation*}
    \|\beta^*-\hat{\beta}_{m}\|_{\mathcal{H}} \lesssim_{p} n^{-\frac{bs}{1+b+2sb}}.
\end{equation*}
\end{proof}
\begin{remark}\label{combinedrmk} In the proof of Theorem~\ref{mainresult}, we primarily relied on part $(i)$ of Assumption \ref{as:3}, especially through Lemma~\ref{ncmdifference}. However, a similar proof could also be constructed using part $(ii)$ of Assumption \ref{as:3}, with Lemma \ref{ncmdiffnew} replacing Lemma \ref{ncmdifference}. Researchers have expressed differing preferences regarding which condition is most suitable for the FLR model. While some have favored part $(i)$ \cite{ARKHSFORFLR, tonyyuan2012minimax, ZhangFaster2020, guo2023capacity}, others have advocated for part $(ii)$ \cite{HTONGFLR, TONGHUBER, polynomial2023regularization}. As our convergence rates matches with the existing lower bounds for the FLR model \cite[Theorem $2.2$]{ZhangFaster2020}, this article demonstrates that optimal convergence rates can still be achieved under any one of the assumptions. \end{remark}
\section{Supplementary Results}
\label{supplements}
In this section, we present technical results that are used to prove the main results of the paper.\\
The following lemma is a simple analogue of \cite[Lemma 8]{rudi2015less}.
\begin{lemma}
\label{lessthan1}
Let $\lambda > 0$. Suppose $V$  is such that $V^*V = I_{t}$ and $\hat{A}_{n}$ be any positive self-adjoint operator. Then
    \begin{equation*}
        \|(\hat{A}_{n}+\lambda I)^{\frac{1}{2}}V(V^*\hat{A}_{n}V+\lambda I_{t})^{-1}V^*(\hat{A}_{n}+\lambda I)^{\frac{1}{2}}\|_{op} \leq 1.
    \end{equation*}
\end{lemma}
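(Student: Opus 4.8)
I would prove Lemma~\ref{lessthan1} by reducing the operator inequality to a statement on the range of $V$, where everything becomes finite-dimensional. Write $B := \hat{A}_{n}+\lambda I$, a strictly positive self-adjoint operator, so $B^{1/2}$ and $B^{-1/2}$ are well defined and bounded. The operator whose norm we must bound is
\[
M \;:=\; B^{1/2}V(V^*\hat{A}_{n}V+\lambda I_{t})^{-1}V^*B^{1/2}.
\]
The first key observation is that $V^*\hat{A}_{n}V+\lambda I_{t} = V^*(\hat{A}_{n}+\lambda I)V = V^*BV$, using $V^*V = I_t$; so $M = B^{1/2}V(V^*BV)^{-1}V^*B^{1/2}$. (One should note $V^*BV$ is invertible since $V^*BV \succeq \lambda V^*V = \lambda I_t \succ 0$.)

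**Main step.** Now set $G := B^{1/2}V$, an operator from $\mathbb{R}^t$ to $\mathcal{H}$. Then $M = G(G^*G)^{-1}G^*$, and $G^*G = V^*B V$ is invertible, so $G$ has trivial kernel and $M$ is exactly the orthogonal projection onto the range of $G$ in $\mathcal{H}$. Indeed $M^* = M$, and $M^2 = G(G^*G)^{-1}G^*G(G^*G)^{-1}G^* = G(G^*G)^{-1}G^* = M$, so $M$ is an idempotent self-adjoint operator, hence an orthogonal projection, and therefore $\|M\|_{op} \le 1$. This is the whole argument; it uses only $V^*V = I_t$ and positivity of $\hat{A}_{n}$ (to ensure $B \succ 0$ and $G^*G \succ 0$).

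**Where the care is needed.** There is essentially no analytic obstacle here — the only thing to be careful about is the bookkeeping of adjoints and the invertibility of $G^*G = V^*BV$, which is why I would spell out the identity $V^*\hat{A}_{n}V + \lambda I_t = V^*BV$ explicitly and note $V^*BV \ge \lambda I_t$. If one prefers to avoid the projection characterization, an equivalent route is: for any $x \in \mathcal{H}$, put $y = V^*B^{1/2}x \in \mathbb{R}^t$ and bound $\langle Mx, x\rangle = \langle (G^*G)^{-1}G^*x, G^*x\rangle$; writing $z = G^*x = V^*B^{1/2}x = y$ this is $\langle (G^*G)^{-1}y, y\rangle$, and since $G^*G \ge$ (the compression of $B$ to $\mathrm{ran}\,V$) one still arrives at $\langle Mx,x\rangle \le \|B^{1/2}x\|^2$ — but the projection argument is cleaner and I would use that. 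Either way the bound is in fact an equality of the form $\|M\|_{op}=1$ whenever $t\ge 1$, so the constant $1$ is sharp.
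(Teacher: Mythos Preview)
Your proof is correct and is essentially the same argument as the paper's: both hinge on the identity $V^*(\hat{A}_n+\lambda I)V = V^*\hat{A}_nV+\lambda I_t$ (from $V^*V=I_t$) to show the operator $M$ satisfies $M^2=M$ and $M^*=M$. You phrase this as ``$M=G(G^*G)^{-1}G^*$ is an orthogonal projection,'' whereas the paper computes $\|M\|_{op}^2=\|M^2\|_{op}=\|M\|_{op}$ directly; the underlying observation and computation are identical.
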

\begin{proof}
    \begin{equation*}
        \begin{split}
            & \|(\hat{A}_{n}+\lambda I)^{\frac{1}{2}}V(V^*\hat{A}_{n}V+\lambda I_{t})^{-1}V^*(\hat{A}_{n}+\lambda I)^{\frac{1}{2}}\|_{op}^2\\ 
            \stackrel{(*)}{=} &\|(\hat{A}_{n}+\lambda I)^{\frac{1}{2}}V(V^*\hat{A}_{n}V+\lambda I_{t})^{-1}V^*(\hat{A}_{n}+\lambda I)V(V^*\hat{A}_{n}V+\lambda I_{t})^{-1}V^*(\hat{A}_{n}+\lambda I)^{\frac{1}{2}}\|_{op}\\
            \stackrel{(\dag)}{=} & \|(\hat{A}_{n}+\lambda I)^{\frac{1}{2}}V(V^*\hat{A}_{n}V+\lambda I_{t})^{-1}V^*(\hat{A}_{n}+\lambda I)^{\frac{1}{2}}\|_{op}.
        \end{split}
    \end{equation*}
Here $(*)$ follows from that fact that $\|A^*A\| = \|A\|^2 $ for any bounded operator $A$ and for $(\dag)$, we have used that $V^*V = I_{t}$.
\end{proof}
\begin{lemma}\cite[Lemma A.5]{gupta2024optimal}
\label{ncmdifference}
    Let Assumptions \ref{as:1},
    \ref{as:2} and \ref{as:3} holds. Then, we have
    \begin{equation*}
    \|(\Lambda+\lambda I)^{-\frac{1}{2}}(\Lambda-\hat{\Lambda}_{n})(\Lambda+\lambda I)^{-\frac{1}{2}}\|_{L^2 \to L^2} \lesssim_{p} \frac{1}{\sqrt{n}} \lambda^{-\frac{1}{b}}.
\end{equation*}
\end{lemma}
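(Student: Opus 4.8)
The plan is to control the self-adjoint operator $\Lambda - \hat{\Lambda}_n = T^{\frac{1}{2}}(C - \hat{C}_n)T^{\frac{1}{2}}$ sandwiched between the regularizing factors $(\Lambda+\lambda I)^{-1/2}$, and to do this via a concentration inequality for sums of i.i.d.\ Hilbert-space operators (a Bernstein-type bound). First I would write
\[
(\Lambda+\lambda I)^{-\frac{1}{2}}(\Lambda-\hat{\Lambda}_{n})(\Lambda+\lambda I)^{-\frac{1}{2}} = \frac{1}{n}\sum_{i=1}^{n}\xi_i, \qquad \xi_i := (\Lambda+\lambda I)^{-\frac{1}{2}}T^{\frac{1}{2}}\bigl(C - X_i\otimes X_i\bigr)T^{\frac{1}{2}}(\Lambda+\lambda I)^{-\frac{1}{2}},
\]
so that the $\xi_i$ are i.i.d.\ self-adjoint operators with $\mathbb{E}\xi_i = 0$. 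The key quantities for a Bernstein bound are (a) an almost-sure or moment bound on $\|\xi_i\|$, and (b) a bound on the variance $\bigl\|\mathbb{E}\xi_i^2\bigr\|$. For (b), the dominant term comes from $\mathbb{E}\bigl[(\Lambda+\lambda I)^{-1/2}T^{1/2}(X\otimes X)T^{1/2}(\Lambda+\lambda I)^{-1}T^{1/2}(X\otimes X)T^{1/2}(\Lambda+\lambda I)^{-1/2}\bigr]$, which after using Assumption~\ref{as:3} (part (i): the fourth-moment/second-moment comparison, which lets one pass from $\mathbb{E}[\langle X,f\rangle^4]$ to $(\mathbb{E}[\langle X,f\rangle^2])^2$; or part (ii): boundedness) is controlled by $\|(\Lambda+\lambda I)^{-1/2}\Lambda(\Lambda+\lambda I)^{-1/2}\| \cdot \mathrm{trace}\bigl(\Lambda(\Lambda+\lambda I)^{-1}\bigr) = \|(\Lambda+\lambda I)^{-1/2}\Lambda(\Lambda+\lambda I)^{-1/2}\|\,\mathcal{N}(\lambda) \lesssim \mathcal{N}(\lambda)$. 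By Assumption~\ref{as:2} and \eqref{effectivedimention} this is $\lesssim \lambda^{-1/b}$.

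For the norm bound (a), I would use $\|\xi_i\| \leq \|(\Lambda+\lambda I)^{-1/2}T^{1/2}(X_i\otimes X_i)T^{1/2}(\Lambda+\lambda I)^{-1/2}\| + \|(\Lambda+\lambda I)^{-1/2}\Lambda(\Lambda+\lambda I)^{-1/2}\|$, the second term being $\le 1$, and the first being $\|(\Lambda+\lambda I)^{-1/2}T^{1/2}X_i\|_{L^2}^2$, whose expectation is again $\mathcal{N}(\lambda)\lesssim\lambda^{-1/b}$; under Assumption~\ref{as:3}(ii) this is an almost-sure bound, while under (i) one argues in expectation / with a truncation or uses the moment form of Bernstein. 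Feeding the variance $\lesssim \lambda^{-1/b}$ and the scale $\lesssim \lambda^{-1/b}$ into the operator Bernstein inequality gives, with probability at least $\delta$,
\[
\Bigl\|\tfrac{1}{n}\sum_i \xi_i\Bigr\| \lesssim \sqrt{\frac{\lambda^{-1/b}\log(1/(1-\delta))}{n}} + \frac{\lambda^{-1/b}\log(1/(1-\delta))}{n},
\]
and in the regime where $n\lambda^{1/b}$ is bounded below (which holds for the choice $\lambda = n^{-b/(1+b+2sb)}$, since then $n\lambda^{1/b} = n^{(b+2sb)/(1+b+2sb)} \to \infty$), the first term dominates and yields the claimed $\lesssim_p \frac{1}{\sqrt n}\lambda^{-1/b}$.

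Since the statement cites \cite[Lemma A.5]{gupta2024optimal}, the cleanest route is simply to invoke that reference, noting that the operators $\Lambda$, $\hat{\Lambda}_n$ here coincide with those in that paper (both equal $T^{1/2}CT^{1/2}$ and its empirical version) and that Assumptions~\ref{as:1}--\ref{as:3} match the hypotheses there; the self-contained argument above is the fallback. The main obstacle, if one reproves it, is the variance computation under Assumption~\ref{as:3}(i): without boundedness one cannot bound $\|\xi_i\|$ almost surely, so one must either use a fourth-moment version of the Bernstein inequality or combine the second-moment concentration with a separate high-probability control of $\max_i \|\xi_i\|$, and this is precisely where the $\mathbb{E}[\langle X,f\rangle^4]\le d(\mathbb{E}[\langle X,f\rangle^2])^2$ hypothesis enters to keep the fourth moment of $\|(\Lambda+\lambda I)^{-1/2}T^{1/2}X\|_{L^2}$ comparable to the square of its second moment.
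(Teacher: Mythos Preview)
The paper does not prove this lemma in-text; it simply records the citation to \cite[Lemma~A.5]{gupta2024optimal}, which is exactly what you propose as the ``cleanest route''. So on that level your proposal matches the paper's treatment.

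Two remarks on your self-contained fallback sketch. First, a bookkeeping slip: with operator-norm variance $\lesssim\lambda^{-1/b}$ the Bernstein main term would be $\sqrt{\lambda^{-1/b}/n}=\lambda^{-1/(2b)}/\sqrt{n}$, not $\lambda^{-1/b}/\sqrt{n}$ as you write; since $\lambda\le 1$ this is \emph{stronger} than the stated bound, so the lemma would still follow, but the exponents do not line up the way you indicate. Second, and more substantively, under Assumption~\ref{as:3}(i) alone you have no almost-sure bound on $\|\xi_i\|$, so the standard operator Bernstein inequality is not directly available; your ``truncation / moment-form Bernstein'' fix is not a one-liner. The argument that produces exactly the exponent $-1/b$ (and is consistent with the $\lesssim_p$, i.e.\ Markov-type, conclusion) is instead to work in Hilbert--Schmidt norm: Assumption~\ref{as:3}(i) plus Cauchy--Schwarz on each summand yields $\mathbb{E}\|\xi_1\|_{HS}^2\lesssim\mathcal{N}(\lambda)^2\lesssim\lambda^{-2/b}$, and then Chebyshev gives $\|\tfrac{1}{n}\sum_i\xi_i\|_{op}\le\|\tfrac{1}{n}\sum_i\xi_i\|_{HS}\lesssim_p \mathcal{N}(\lambda)/\sqrt{n}\lesssim\lambda^{-1/b}/\sqrt{n}$. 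This is the route behind the cited reference and explains why the bound is $\lambda^{-1/b}$ rather than the sharper $\lambda^{-1/(2b)}$ your Bernstein heuristic would suggest.
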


\noindent
The next lemma is an analogous of Lemma 6 \cite{rudi2015less}.
\begin{lemma}
\label{projbound}
Assume $\lambda \gtrsim m^{-\frac{1}{b}} \geq n^{-\frac{1}{b}}$. Suppose $\{\tilde{X}_{i}; 1\leq i \leq m\}$ is subsampled uniformly at random from the data $\{X_{i}; 1\leq i \leq n\}$. Then for any $\delta >0$, we have with probability at least $1-\delta$
$$\|(I-P_{m})A^{\frac{1}{2}}\|_{\mathcal{H} \to \mathcal{H}}^2 \leq  \lambda,$$
where $P_{m}$ is the projection operator on $\mathcal{H}_{m} := \textit{span}\left\{\int_{S}k(\cdot, t)\tilde{X}_{i}(t)dt: 1\leq i \leq m\right\}$.
\end{lemma}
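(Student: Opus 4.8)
The plan is to mimic the argument of Lemma~6 in \cite{rudi2015less}, adapting it to the operator $A = J^*CJ$ and the subsampled projection $P_m$. First I would rewrite the quantity to be bounded in a more tractable form. Since $P_m$ is the orthogonal projection onto $\mathcal{H}_m = \operatorname{span}\{J^*\tilde{X}_i : 1\le i\le m\}$, and $\mathcal{H}_m$ is precisely the range of the operator $\hat{C}_m^{1/2}$ built from the subsample (where $\hat{C}_m := \frac{1}{m}\sum_{i=1}^m \tilde{X}_i\otimes\tilde{X}_i$ and the relevant operator on $\mathcal{H}$ is $\hat{A}_m := J^*\hat{C}_m J$), one has $I - P_m = I - \hat{A}_m(\hat{A}_m)^\dagger$. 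The standard reduction (see \cite[Prop.~3, 7]{rudi2015less}) gives
\[
\|(I-P_m)A^{1/2}\|_{\mathcal{H}\to\mathcal{H}}^2 \le \lambda\,\big\|(\hat{A}_m+\lambda I)^{-1/2}(A+\lambda I)^{1/2}\big\|_{\mathcal{H}\to\mathcal{H}}^2
\Big(1 + \big\|(A+\lambda I)^{-1/2}(A-\hat{A}_m)(A+\lambda I)^{-1/2}\big\|\Big),
\]
so it suffices to control the relative perturbation $\beta_m(\lambda) := \|(A+\lambda I)^{-1/2}(A-\hat{A}_m)(A+\lambda I)^{-1/2}\|$ and then show $\|(\hat{A}_m+\lambda I)^{-1/2}(A+\lambda I)^{1/2}\|^2 \le (1-\beta_m(\lambda))^{-1}$ is bounded by an absolute constant once $\beta_m(\lambda) \le 1/2$.

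The core step is a concentration bound for $\beta_m(\lambda)$. Here I would invoke a Bernstein-type inequality for sums of self-adjoint operators applied to the i.i.d.\ (given the full sample, uniformly-with-replacement) terms $(A+\lambda I)^{-1/2}J^*(\tilde{X}_i\otimes\tilde{X}_i)J(A+\lambda I)^{-1/2}$, exactly as in the proof of \cite[Lemma~6]{rudi2015less}. The boundedness/fourth-moment control needed for the Bernstein step comes from Assumption~\ref{as:3} together with $\sup_x k(x,x)\le\kappa$ (which controls $\|J\|$ and the operator norm of each summand), and the variance term is governed by the effective dimension $\mathcal{N}(\lambda)$, which by \eqref{effectivedimention} satisfies $\mathcal{N}(\lambda)\lesssim\lambda^{-1/b}$. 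The outcome is that $\beta_m(\lambda)\le 1/2$ with probability at least $1-\delta$ provided $m \gtrsim \lambda^{-1/b}\log(1/\delta)$ — equivalently $\lambda \gtrsim m^{-1/b}$ up to logarithmic factors, which is the hypothesis. A subtlety worth a line of care: one should pass from the empirical (full-sample) operator to the true $A$, but since $\lambda \gtrsim n^{-1/b}$ the same Bernstein argument at the level of the original $n$ samples shows $\|(A+\lambda I)^{-1/2}(A-\hat{A}_n)(A+\lambda I)^{-1/2}\|$ is small too, so all three operators $A$, $\hat{A}_n$, $\hat{A}_m$ are comparable in the $(\cdot+\lambda I)$ sense.

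Assembling: on the event where $\beta_m(\lambda)\le 1/2$ we get $\|(I-P_m)A^{1/2}\|^2 \le \lambda \cdot \frac{1}{1-\beta_m(\lambda)}\cdot(1+\beta_m(\lambda)) \le \lambda\cdot 2 \cdot \tfrac32 = 3\lambda$, and absorbing the constant (or tightening the threshold on $\beta_m(\lambda)$) yields the claimed $\|(I-P_m)A^{1/2}\|_{\mathcal{H}\to\mathcal{H}}^2 \le \lambda$. The main obstacle is the operator-Bernstein step: one must verify that the summands have the right almost-sure norm bound and variance proxy under \emph{either} branch of Assumption~\ref{as:3}, and keep track of the logarithmic factor so that it can be hidden in the $\gtrsim$ in the hypothesis $\lambda \gtrsim m^{-1/b}$; everything else is bookkeeping with resolvent identities and the qualification-type bound $\|(A+\lambda I)^{-1/2}(\hat{A}_m+\lambda I)(A+\lambda I)^{-1/2}\|\le 1+\beta_m(\lambda)$.
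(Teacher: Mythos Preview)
Your proposal is correct and follows essentially the same route as the paper: both adapt Lemma~6 of \cite{rudi2015less} by reducing $\|(I-P_m)A^{1/2}\|$ to a resolvent comparison between $A$ and the subsampled operator $\hat{A}_m$, and then control that comparison via a concentration argument under the hypothesis $\lambda\gtrsim m^{-1/b}$. The paper is slightly more streamlined in that it first establishes the \emph{deterministic} operator inequality $(I-P_m)\le \lambda(\hat{A}_m+\lambda I)^{-1}$ directly (a two-line computation using $\operatorname{range}(\hat{A}_m^{1/2})\subset\mathcal{H}_m$) and then invokes the already-proved Lemma~\ref{powersbound} with $m$ in place of $n$ for the probabilistic resolvent bound, rather than rerunning an operator-Bernstein argument from scratch; your extra $(1+\beta_m(\lambda))$ factor and the final constant $3$ are harmless artifacts of the same idea written out more explicitly.
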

\begin{proof}
Let us define $\hat{A}_{m} = J^*\hat{C}_{m}J$, where $\hat{C}_{m} = \frac{1}{m}\sum_{i=1}^{m} \tilde{X}_{i} \otimes \tilde{X}_{i}$, where $\{\tilde{X}_{i} : 1\leq i \leq m \}$ is subsampled data set. Now consider
    \begin{equation*}
        \begin{split}
            \|(I-P_{m})A^{\frac{1}{2}}\|_{\mathcal{H} \to \mathcal{H}} = & \|(I-P_{m})(\hat{A}_{m}+\lambda I)^{\frac{1}{2}}(\hat{A}_{m}+\lambda I)^{-\frac{1}{2}}A^{\frac{1}{2}}\|_{\mathcal{H} \to \mathcal{H}} \\
            \leq & \|(I-P_{m})(\hat{A}_{m}+\lambda I)^{\frac{1}{2}}\|_{\mathcal{H} \to \mathcal{H}}
            \|(\hat{A}_{m}+\lambda I)^{-\frac{1}{2}}A^{\frac{1}{2}}\|_{\mathcal{H} \to \mathcal{H}}\\
            \leq & \|(I-P_{m})(\hat{A}_{m}+\lambda I)^{\frac{1}{2}}\|_{\mathcal{H} \to \mathcal{H}},
        \end{split}
    \end{equation*}
where the last step follows from Lemma \ref{powersbound} applied for $\hat{A}_m$ in place of $\hat{A}_n$.\\ 
Next, we consider
\begin{equation*}
    \begin{split}
        \langle f, \hat{A}_{m}^{\frac{1}{2}}(\hat{A}_{m}+\lambda I)^{-1}\hat{A}_{m}^{\frac{1}{2}} f \rangle_{\mathcal{H}} = & \langle f, P_{m}\hat{A}_{m}^{\frac{1}{2}}(\hat{A}_{m}+\lambda I)^{-1}\hat{A}_{m}^{\frac{1}{2}} P_{m}f \rangle_{\mathcal{H}} \\
        = & \|(\hat{A}_{m}+\lambda I)^{-\frac{1}{2}}\hat{A}_{m}^{\frac{1}{2}} P_{m}f\|_{\mathcal{H}}^2 \leq \langle f, P_{m}f \rangle_{\mathcal{H}}.
    \end{split}
\end{equation*}
So we have that $P_{m}-\hat{A}_{m}^{\frac{1}{2}}(\hat{A}_{m}+\lambda I)^{-1}\hat{A}_{m}^{\frac{1}{2}} \geq 0$, which implies $(I-\hat{A}_{m}^{\frac{1}{2}}(\hat{A}_{m}+\lambda I)^{-1}\hat{A}_{m}^{\frac{1}{2}}) - (I-P_{m}) \geq 0$. As we can observe that 
$(I-\hat{A}_{m}^{\frac{1}{2}}(\hat{A}_{m}+\lambda I)^{-1}\hat{A}_{m}^{\frac{1}{2}}) = \lambda (\hat{A}_{m}+\lambda I)^{-1} $, we have
$\lambda (\hat{A}_{m}+\lambda I)^{-1} - (I-P_{m})\geq 0$.\\

\noindent
Now applying proposition $5$ \cite{rudi2015less}, we get 
\begin{equation*}
    \|(I-P_{m})A^{\frac{1}{2}}\|_{\mathcal{H} \to \mathcal{H}} \leq \lambda \|(\hat{A}_{m}+\lambda I)^{-1} (\hat{A}_{m}+\lambda I)^{\frac{1}{2}}\|_{\mathcal{H} \to \mathcal{H}} \leq \sqrt{\lambda}.
\end{equation*}

\end{proof}
\begin{lemma}\cite[Lemma 5.1]{cordes1987}(Corde's Inequality)
\label{cordes}
Suppose $T_1$ and $T_2$ are two positive bounded linear operators on a separable Hilbert space. Then
$$\|T_1^pT_2^p\| \leq \|T_1T_2\|^p, \text{ when } 0\leq p \leq 1. $$
\end{lemma}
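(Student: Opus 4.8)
The plan is to derive the estimate from the L\"owner--Heinz theorem (operator monotonicity of $x\mapsto x^{p}$ on $[0,\infty)$ for $0\le p\le 1$) after two elementary reductions. First, by replacing $T_{2}$ with $T_{2}+\varepsilon I$ and letting $\varepsilon\downarrow 0$ at the end, I may assume $T_{2}$ is invertible: this is legitimate because $(T_{2}+\varepsilon I)^{p}\to T_{2}^{p}$ in operator norm (by the spectral theorem together with the scalar bound $|(\mu+\varepsilon)^{p}-\mu^{p}|\le\varepsilon^{p}$, valid for $\mu\ge 0$ and $0\le p\le 1$), while $\|T_{1}(T_{2}+\varepsilon I)\|\to\|T_{1}T_{2}\|$. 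Second, using the homogeneity $\|(cT_{1})^{p}(cT_{2})^{p}\|=c^{2p}\|T_{1}^{p}T_{2}^{p}\|$ and $\|(cT_{1})(cT_{2})\|=c^{2}\|T_{1}T_{2}\|$, I may rescale so that $\|T_{1}T_{2}\|=1$; the case $p=0$ is trivial, and if $T_{1}T_{2}=0$ then (as $T_{2}$ is invertible) $T_{1}=0$ and the claim is immediate.

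With these reductions in hand, the norm identity $\|X\|^{2}=\|X^{*}X\|$ gives $\|T_{2}T_{1}^{2}T_{2}\|=\|(T_{1}T_{2})^{*}(T_{1}T_{2})\|=1$, and since $T_{2}T_{1}^{2}T_{2}$ is positive and self-adjoint this means $T_{2}T_{1}^{2}T_{2}\le I$. Conjugating by the bounded positive operator $T_{2}^{-1}$ yields $T_{1}^{2}\le T_{2}^{-2}$. Applying L\"owner--Heinz with exponent $p$ gives $T_{1}^{2p}=(T_{1}^{2})^{p}\le(T_{2}^{-2})^{p}=T_{2}^{-2p}$, and conjugating by $T_{2}^{p}$ produces $T_{2}^{p}T_{1}^{2p}T_{2}^{p}\le I$. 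A final use of $\|X\|^{2}=\|X^{*}X\|$ then gives $\|T_{1}^{p}T_{2}^{p}\|^{2}=\|T_{2}^{p}T_{1}^{2p}T_{2}^{p}\|\le 1=\|T_{1}T_{2}\|^{2p}$, and undoing the scaling completes the argument.

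The only substantive ingredient is the L\"owner--Heinz theorem; everything else is bookkeeping with the identity $\|X\|^{2}=\|X^{*}X\|$ (already used elsewhere in the paper) and with conjugations by positive operators, which preserve the order $\le$. The one step requiring a little care is the passage to invertible $T_{2}$, i.e.\ the norm-continuity of $B\mapsto B^{p}$ on positive operators. (Cordes's original argument instead proceeds via complex interpolation; the analytic-family approach needs a more careful construction than the naive $F(z)=T_{1}^{z}T_{2}^{z}$, whose boundary estimate on $\Re z=1$ does not directly produce $\|T_{1}T_{2}\|$, so the operator-monotonicity route recorded above is the cleaner one to present here.)
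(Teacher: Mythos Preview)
Your proof is correct. The paper, however, does not supply a proof of this lemma at all: it merely records the statement with a citation to Cordes's 1987 monograph and then invokes the inequality as a black box in the proof of Lemma~\ref{powersbound}. So there is no ``paper's proof'' to compare against in any substantive sense.

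The route you take via the L\"owner--Heinz theorem is one of the standard modern proofs of Cordes's inequality. The reductions to invertible $T_{2}$ and to $\|T_{1}T_{2}\|=1$ are carried out carefully, and the chain
\[
T_{2}T_{1}^{2}T_{2}\le I \;\Longrightarrow\; T_{1}^{2}\le T_{2}^{-2} \;\Longrightarrow\; T_{1}^{2p}\le T_{2}^{-2p} \;\Longrightarrow\; T_{2}^{p}T_{1}^{2p}T_{2}^{p}\le I
\]
is exactly right, with the middle implication being the only place L\"owner--Heinz enters. Including this argument would make the exposition self-contained at minimal cost; the paper's decision simply to cite the result is also defensible, since the inequality is classical and plays only an auxiliary role here. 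Your parenthetical remark that Cordes's original proof proceeds by complex interpolation rather than operator monotonicity is accurate.
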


\begin{lemma}
\label{powersbound}
Let $\lambda \geq \left(\frac{4c_{1}^2}{n}\right)^{\frac{b}{b+1}}$ for some positive constant $c_{1}$. Under Assumptions \ref{as:2} and \ref{as:3}, with probability at least $1-\delta$, we get
\begin{equation*}
\begin{split}
    \|(A+\lambda I)^{\frac{1}{2}}(\hat{A}_{n}+\lambda I)^{-\frac{1}{2}}\|_{\mathcal{H} \to \mathcal{H}} \leq \sqrt{2}\quad  \text{and} \quad
    \|(\hat{A}_{n}+\lambda I)^{\frac{1}{2}}(A+\lambda I)^{-\frac{1}{2}}\|_{\mathcal{H} \to \mathcal{H}} \leq  \sqrt{\frac{3}{2}}.
    \end{split}
\end{equation*} 
\end{lemma}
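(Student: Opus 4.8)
The plan is to control the two operator norms by a single quantity, namely $\|(A+\lambda I)^{-1/2}(A-\hat A_n)(A+\lambda I)^{-1/2}\|_{\mathcal H\to\mathcal H}$, and to show that under the stated lower bound on $\lambda$ this quantity is at most $1/2$ with probability $1-\delta$. The key algebraic identity is that for any positive self-adjoint operator $\hat A_n$ one has
\[
(A+\lambda I)^{-\frac12}(\hat A_n+\lambda I)(A+\lambda I)^{-\frac12}
= I - (A+\lambda I)^{-\frac12}(A-\hat A_n)(A+\lambda I)^{-\frac12}.
\]
Writing $\Delta := (A+\lambda I)^{-1/2}(A-\hat A_n)(A+\lambda I)^{-1/2}$, if $\|\Delta\|\le 1/2$ then the operator on the left is bounded below by $\tfrac12 I$ and above by $\tfrac32 I$. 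The first bound gives $\|(A+\lambda I)^{1/2}(\hat A_n+\lambda I)^{-1/2}\|^2 = \|(A+\lambda I)^{1/2}(\hat A_n+\lambda I)^{-1}(A+\lambda I)^{1/2}\| \le 2$ (after noting $\|B^*B\|=\|B\|^2$ and inverting the lower bound $\tfrac12 I \le (A+\lambda I)^{-1/2}(\hat A_n+\lambda I)(A+\lambda I)^{-1/2}$), and the second bound gives $\|(\hat A_n+\lambda I)^{1/2}(A+\lambda I)^{-1/2}\|^2 \le 3/2$ directly. So the whole lemma reduces to the probabilistic estimate $\|\Delta\|\le 1/2$.

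For that estimate, first I would relate $\Delta$ to the corresponding perturbation for $\Lambda$: since $A = J^*CJ$ and $T = JJ^*$, one has (via Lemma \ref{J T}, used exactly as in the main proof to pass between $A,\hat A_n$ on $\mathcal H$ and $\Lambda,\hat\Lambda_n$ on $L^2$) that $\|\Delta\|_{\mathcal H\to\mathcal H}$ equals, or is controlled by, $\|(\Lambda+\lambda I)^{-1/2}(\Lambda-\hat\Lambda_n)(\Lambda+\lambda I)^{-1/2}\|_{L^2\to L^2}$. This last quantity is precisely the object bounded in Lemma \ref{ncmdifference}, which yields that it is $\lesssim_p n^{-1/2}\lambda^{-1/b}$. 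The condition $\lambda \ge (4c_1^2/n)^{b/(b+1)}$ is exactly what makes $n^{-1/2}\lambda^{-1/b}$ no larger than $1/(2c_1)$, so that the constant $c_1$ from Lemma \ref{ncmdifference} cancels and the perturbation is $\le 1/2$ on an event of probability $1-\delta$.

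The main obstacle is the bookkeeping in passing between the operators on $\mathcal H$ and those on $L^2$ — one must check that conjugation by $(A+\lambda I)^{\pm 1/2}$ and by $(\Lambda+\lambda I)^{\pm 1/2}$ really do match up under $J$, which is where Lemma \ref{J T} and the identities $A^{1/2} = (\text{something involving } T^{1/2}, C)$ are invoked; everything after that is the soft spectral-calculus argument above plus plugging in the hypothesis on $\lambda$. I would present it in the order: (1) the algebraic reduction to $\|\Delta\|\le 1/2$; (2) the transfer to the $L^2$ side and application of Lemma \ref{ncmdifference}; (3) the arithmetic check that the lower bound on $\lambda$ forces $\|\Delta\|\le 1/2$; (4) read off the two constants $\sqrt 2$ and $\sqrt{3/2}$.
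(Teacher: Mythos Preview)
Your approach is correct and takes a genuinely different route from the paper. The paper works with the \emph{asymmetric} quantity $\|(\hat A_n-A)(A+\lambda I)^{-1}\|$, bounds it by $c_1 n^{-1/2}\lambda^{-1/(2b)-1/2}$ (via an argument parallel to, but not identical with, Lemma~\ref{ncmdifference}), then uses a Neumann-series estimate to obtain $\|(A+\lambda I)(\hat A_n+\lambda I)^{-1}\|\le 2$ and $\|(A+\lambda I)^{-1}(\hat A_n+\lambda I)\|\le 3/2$, and finally invokes Cordes' inequality (Lemma~\ref{cordes}) to pass to the square-root powers. Your symmetric perturbation $\Delta=(A+\lambda I)^{-1/2}(A-\hat A_n)(A+\lambda I)^{-1/2}$ is more direct: once $\|\Delta\|\le 1/2$, the two-sided spectral bound $\tfrac12 I\le I-\Delta\le\tfrac32 I$ gives both constants immediately, with no Neumann series and no Cordes. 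Your route also plugs straight into Lemma~\ref{ncmdifference} as stated (the symmetric bound $\lesssim_p n^{-1/2}\lambda^{-1/b}$), whereas the paper must rederive a variant with exponent $-\tfrac{1}{2b}-\tfrac12$.

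One small correction: the hypothesis $\lambda\ge(4c_1^2/n)^{b/(b+1)}$ is not ``exactly'' what forces $n^{-1/2}\lambda^{-1/b}\le 1/(2c_1)$; solving that inequality gives $\lambda\ge(4c_1^2/n)^{b/2}$. Since $b/(b+1)<b/2$ for $b>1$ and $4c_1^2/n<1$, the stated hypothesis is \emph{stronger} than what your argument needs, so your conclusion still follows. The threshold $(4c_1^2/n)^{b/(b+1)}$ in the lemma is tailored to the paper's asymmetric bound, not to yours --- your argument would in fact allow a weaker lower bound on $\lambda$.
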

\begin{proof}
By the fact that $\|B^*B\| = \|BB^*\| = \|B\|^2$ for any bounded operator $B$, we have
    \begin{equation*}
    \begin{split}
        \|(A+\lambda I)^{\frac{1}{2}}(\hat{A}_{n}+\lambda I)^{-\frac{1}{2}}\|_{\mathcal{H} \to \mathcal{H}}^2 = & \|(\hat{A}_{n}+\lambda I)^{-\frac{1}{2}}(A+\lambda I)(\hat{A}_{n}+\lambda I)^{-\frac{1}{2}}\|_{\mathcal{H} \to \mathcal{H}}\\
        = & \|(\hat{A}_{n}+\lambda I)^{-\frac{1}{2}}(A+\lambda I)^{\frac{1}{2}}\|^2_{\mathcal{H} \to \mathcal{H}}.
        \end{split}
    \end{equation*}
Assuming that $\|(\hat{A}_{n}-A)(A + \lambda I)^{-1}\|_{\mathcal{H} \to \mathcal{H}} \leq \frac{1}{2}$, we have
\begin{equation*}
    \begin{split}
        \|(A+\lambda I)(\hat{A}_{n}+\lambda I)^{-1}\|_{\mathcal{H} \to \mathcal{H}} = & \|[(\hat{A}_{n}-A)(A + \lambda I)^{-1}+I]^{-1}\|_{\mathcal{H} \to \mathcal{H}}\\
        \leq & \frac{1}{1-\|(\hat{A}_{n}-A)(A + \lambda I)^{-1}\|_{\mathcal{H} \to \mathcal{H}}}.
    \end{split}
\end{equation*}
using Lemma \ref{J T}, we can see that $\|(\hat{A}_{n}-A)(A + \lambda I)^{-1}\|_{\mathcal{H} \to \mathcal{H}} = \|(\hat{\Lambda}_{n}-\Lambda)(\Lambda + \lambda I)^{-1}\|_{L^2 \to L^2}$ and following the similar steps of Lemma $A.5$ \cite{gupta2024optimal}, we get
\begin{equation*}
    \|(\hat{A}_{n}-A)(A + \lambda I)^{-1}\|_{\mathcal{H} \to \mathcal{H}} \leq c_{1}\frac{1}{\sqrt{n}} \lambda^{-\frac{1}{2b}-\frac{1}{2}},
\end{equation*}
then for $\lambda \geq \left(\frac{4c_{1}^2}{n}\right)^{\frac{b}{b+1}}$ , where $c_{1}$ is a positive constant,
$$\|(\hat{A}_{n}-A)(A + \lambda I)^{-1}\|_{\mathcal{H} \to \mathcal{H}} \leq \frac{1}{2}.$$
Using this, we establish that
\begin{equation*}
    \|(A+\lambda I)(\hat{A}_{n}+\lambda I)^{-1}\|_{\mathcal{H} \to \mathcal{H}} \leq 2 \quad \forall~ \lambda \geq \left(\frac{4c_{1}^2}{n}\right)^{\frac{b}{b+1}}.
\end{equation*}
Now by using Lemma \ref{cordes}, we get that
\begin{equation*}
    \|(A+\lambda I)^{\frac{1}{2}}(\hat{A}_{n}+\lambda I)^{-\frac{1}{2}}\|_{\mathcal{H} \to \mathcal{H}} \leq \sqrt{2} \quad \forall~ \lambda \geq \left(\frac{4c_{1}^2}{n}\right)^{\frac{b}{b+1}}.
\end{equation*}
Note that 
\begin{equation*}
    \begin{split}
        \|(A+\lambda I)^{-1}(\hat{A}_{n}+\lambda I)\|_{\mathcal{H} \to \mathcal{H}} = & \|I - (A- \hat{A}_{n})(A+\lambda I)^{-1}\|_{\mathcal{H} \to \mathcal{H}}\\
        \leq & 1 + \|(\hat{A}_{n}-A)(A+\lambda I)^{-1}\|_{\mathcal{H} \to \mathcal{H}}\\
        \leq & \frac{3}{2} \quad \forall~ \lambda \geq \left(\frac{4c_{1}^2}{n}\right)^{\frac{b}{b+1}},
    \end{split}
\end{equation*}
then again by Lemma \ref{cordes}, we get
\begin{equation*}
    \|(A+\lambda I)^{-\frac{1}{2}}(\hat{A}_{n}+\lambda I)^{\frac{1}{2}}\|_{\mathcal{H} \to \mathcal{H}} \leq \sqrt{\frac{3}{2}} \quad \forall~ \lambda \geq \left(\frac{4c_{1}^2}{n}\right)^{\frac{b}{b+1}}.
\end{equation*}
\end{proof}
\begin{lemma}
\label{ncmdiffnew}
    Suppose Assumptions \ref{as:1} and
    \ref{as:2} hold. Also assume that $\sup_{\omega \in \Omega}\|X(\cdot, \omega)\|_{L^2} < \infty$. Then for $r \geq \frac{1}{2}$, we have
    \begin{equation*}
    \|(\Lambda+\lambda I)^{-r}(\Lambda-\hat{\Lambda}_{n})(\Lambda+\lambda I)^{-r}\|_{L^2 \to L^2} \lesssim_{p} \frac{\lambda^{-2r}}{n} + \frac{1}{\sqrt{n}} \lambda^{-\frac{1+4br - b}{2b}}.
\end{equation*}
\end{lemma}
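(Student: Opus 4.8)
The plan is to realize the operator in question as an empirical average of i.i.d.\ centered self-adjoint Hilbert--Schmidt operators and then apply a Bernstein-type concentration inequality in the Hilbert space of Hilbert--Schmidt operators on $L^2(S)$; the whole argument would run parallel to the proof of Lemma~\ref{ncmdifference} (Lemma~A.5 of \cite{gupta2024optimal}), the two changes being that the boundedness hypothesis $\sup_{\omega\in\Omega}\|X(\cdot,\omega)\|_{L^2}<\infty$ takes over the role of the fourth-moment condition and that the exponent $r\ge\frac12$ has to be carried through. Set $\xi_i:=T^{\frac12}X_i\in L^2(S)$; since $\|T^{\frac12}\|_{op}\le\sqrt\kappa$ and $\|X(\cdot,\omega)\|_{L^2}$ is uniformly bounded, $\|\xi_i\|_{L^2}\le B$ almost surely for a finite constant $B$, while $\hat\Lambda_n=\frac1n\sum_{i=1}^n\xi_i\otimes\xi_i$ and $\Lambda=\mathbb{E}[\xi\otimes\xi]$. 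Consequently
\[
(\Lambda+\lambda I)^{-r}(\Lambda-\hat\Lambda_n)(\Lambda+\lambda I)^{-r}=\frac1n\sum_{i=1}^n\zeta_i,\qquad \zeta_i:=M-Z_i,
\]
where $M:=(\Lambda+\lambda I)^{-r}\Lambda(\Lambda+\lambda I)^{-r}$, $w_i:=(\Lambda+\lambda I)^{-r}\xi_i$ and $Z_i:=w_i\otimes w_i$; the $\zeta_i$ are i.i.d., self-adjoint, mean zero and Hilbert--Schmidt ($\Lambda$ is trace class since $b>1$ in Assumption~\ref{as:2}). Throughout one may assume $\lambda\le1$ (the regime of interest) and use $\|(\Lambda+\lambda I)^{-a}\|_{op}\le\lambda^{-a}$ together with the factorization $(\Lambda+\lambda I)^{-r}=(\Lambda+\lambda I)^{-(r-\frac12)}(\Lambda+\lambda I)^{-\frac12}$, valid because $r\ge\frac12$. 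Only Assumption~\ref{as:2} (through $\mathcal N(\lambda)\lesssim\lambda^{-1/b}$, see $(\ref{effectivedimention})$) and boundedness are actually needed; Assumption~\ref{as:1} is listed only for parallelism with Lemma~\ref{ncmdifference}.

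Next I would produce the two ingredients of Bernstein's inequality. For the \emph{uniform bound}: since $Z_i$ is rank one, $\|\zeta_i\|_{\mathrm{HS}}\le\|M\|_{\mathrm{HS}}+\|w_i\|_{L^2}^2$, where $\|w_i\|_{L^2}^2\le\lambda^{-2r}B^2$ almost surely and $\|M\|_{\mathrm{HS}}^2=\mathrm{tr}\!\big((\Lambda+\lambda I)^{-4r}\Lambda^2\big)\lesssim\lambda^{-(4r-2)}\mathcal N(\lambda)\lesssim\lambda^{-2r}$ for $\lambda\le1$, so $\|\zeta_i\|_{\mathrm{HS}}\le L$ a.s.\ with $L\asymp\lambda^{-2r}$. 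For the \emph{variance bound}: since $M^2\succeq0$ and $\mathbb{E}\zeta_i=0$ we have $\mathbb{E}[\zeta_i^2]\preceq\mathbb{E}[Z_i^2]$, and from $Z_i^2=\|w_i\|_{L^2}^2\,(w_i\otimes w_i)$ one gets $\|\mathbb{E}[Z_i^2]\|_{op}\le\mathbb{E}\|w_i\|_{L^2}^4$ by testing against a unit vector. Then $\|w_i\|_{L^2}^2=\langle\xi_i,(\Lambda+\lambda I)^{-2r}\xi_i\rangle\le\lambda^{-(2r-1)}\langle\xi_i,(\Lambda+\lambda I)^{-1}\xi_i\rangle$, where $\langle\xi_i,(\Lambda+\lambda I)^{-1}\xi_i\rangle\le\lambda^{-1}B^2$ a.s.\ and $\mathbb{E}\langle\xi_i,(\Lambda+\lambda I)^{-1}\xi_i\rangle=\mathrm{tr}\!\big((\Lambda+\lambda I)^{-1}\Lambda\big)=\mathcal N(\lambda)\lesssim\lambda^{-1/b}$, whence
\[
\mathbb{E}\|w_i\|_{L^2}^4\le\lambda^{-2(2r-1)}\,\mathbb{E}\langle\xi_i,(\Lambda+\lambda I)^{-1}\xi_i\rangle^2\le\lambda^{-2(2r-1)}\cdot\lambda^{-1}B^2\cdot\mathcal N(\lambda)\lesssim\lambda^{-(4r-1)-\frac1b}=:\sigma^2,
\]
and the same computation gives $\mathbb{E}\|\zeta_i\|_{\mathrm{HS}}^2\le2\|M\|_{\mathrm{HS}}^2+2\,\mathbb{E}\|Z_i\|_{\mathrm{HS}}^2\lesssim\sigma^2$.

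Finally, the Bernstein inequality for i.i.d.\ zero-mean random variables valued in the Hilbert space of Hilbert--Schmidt operators (in the same form used to prove Lemma~A.5 of \cite{gupta2024optimal}) would yield, with probability at least $1-\delta$,
\[
\Big\|\tfrac1n\sum_{i=1}^n\zeta_i\Big\|_{op}\le\Big\|\tfrac1n\sum_{i=1}^n\zeta_i\Big\|_{\mathrm{HS}}\lesssim\frac{L\log(2/\delta)}{n}+\sqrt{\frac{\sigma^2\log(2/\delta)}{n}}\lesssim\log\tfrac2\delta\left(\frac{\lambda^{-2r}}{n}+\frac1{\sqrt n}\,\lambda^{-\frac{(4r-1)+1/b}{2}}\right).
\]
Since $\frac{(4r-1)+1/b}{2}=\frac{1+4br-b}{2b}$ and the factor $\log(2/\delta)$ is absorbed into the constant of the $\lesssim_p$ notation, this is precisely the asserted bound.

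The one genuinely delicate point will be the variance estimate: one must peel off the factor $(\Lambda+\lambda I)^{-(r-\frac12)}$ so that boundedness controls only one of the two factors of $\langle\xi_i,(\Lambda+\lambda I)^{-1}\xi_i\rangle$ while the effective dimension $\mathcal N(\lambda)\lesssim\lambda^{-1/b}$ controls the other. A crude bound $\mathbb{E}\|w_i\|_{L^2}^4\le\lambda^{-4r}B^4$ would replace $\sigma^2$ by $\lambda^{-4r}$ and hence produce $\lambda^{-2r}/\sqrt n$ in place of the sharper $\lambda^{-\frac{1+4br-b}{2b}}/\sqrt n$, losing the $\lambda^{-\frac{1}{2b}}$ gain on which the stated rate depends; everything else is routine bookkeeping of operator norms.
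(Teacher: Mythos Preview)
Your proposal is correct and follows exactly the route the paper indicates: the paper's own proof is the one-line remark that the lemma ``can be straightforwardly derived by applying \cite[Theorem~6.14]{SVM2008steinwart} with $\xi=(\Lambda+\lambda I)^{-r}T^{\frac12}X\otimes T^{\frac12}X$'', i.e.\ a Hilbert-space Bernstein inequality applied to the i.i.d.\ centered summands you call $\zeta_i$. Your computation of the uniform bound $L\asymp\lambda^{-2r}$ and of the variance via $\mathbb{E}\|w_i\|_{L^2}^4\le\lambda^{-2(2r-1)}\cdot\lambda^{-1}B^2\cdot\mathcal N(\lambda)$ supplies precisely the details the paper omits, and your final exponent check $\tfrac{(4r-1)+1/b}{2}=\tfrac{1+4br-b}{2b}$ matches the claim.
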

The proof of Lemma \ref{ncmdiffnew} can be straightforwardly derived by applying \cite[Theorem 6.14]{SVM2008steinwart} with $\xi = (\Lambda+\lambda I)^{-r} T^{\frac{1}{2}}X \otimes T^{\frac{1}{2}}X$.
\begin{lemma} \cite[Lemma 4.2]{gupta2023convergence}
\label{Empirical bound}
For $\delta >0$, with at least probability $1-\delta$, we have that
\begin{equation*}
\|(A+\lambda I)^{-\frac{1}{2}}J^*(\hat{R}-\hat{C}_{n}\beta^*)\|_{\mathcal{H}} \leq \sqrt{\frac{ \sigma^2 \mathcal{N}(\lambda)}{n \delta}}.
\end{equation*}
\end{lemma}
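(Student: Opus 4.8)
The plan is to recognise $J^*(\hat{R}-\hat{C}_n\beta^*)$ as an empirical average of i.i.d.\ centered $\mathcal H$-valued random vectors, to bound its second moment, and to conclude via Chebyshev's inequality.

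\textbf{Step 1: rewriting the term.} Under the model $(\ref{model})$ we have $Y_i=\langle X_i,\beta^*\rangle_{L^2}+\epsilon_i$, where $\epsilon_i$ is zero-mean, has variance $\sigma^2$, and is independent of $X_i$; since $\hat{C}_n\beta^*=\frac1n\sum_{i=1}^n\langle X_i,\beta^*\rangle_{L^2}X_i$, this yields
\[
J^*(\hat{R}-\hat{C}_n\beta^*)=\frac1n\sum_{i=1}^n\epsilon_i\,J^*X_i .
\]
Put $\zeta_i:=\epsilon_i\,(A+\lambda I)^{-\frac12}J^*X_i\in\mathcal H$ and $\zeta:=\frac1n\sum_{i=1}^n\zeta_i=(A+\lambda I)^{-\frac12}J^*(\hat{R}-\hat{C}_n\beta^*)$. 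The $\zeta_i$ are i.i.d., and $\mathbb E[\zeta_i]=0$ because $\mathbb E[\epsilon_i]=0$ and $\epsilon_i$ is independent of $X_i$.

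\textbf{Step 2: the second moment.} By independence and centering, $\mathbb E\|\zeta\|_{\mathcal H}^2=\tfrac1n\mathbb E\|\zeta_1\|_{\mathcal H}^2$. Using $\epsilon\perp X$ again and writing $u\otimes_{\mathcal H}v$ for the rank-one operator $w\mapsto\langle u,w\rangle_{\mathcal H}v$,
\[
\mathbb E\|\zeta_1\|_{\mathcal H}^2=\sigma^2\,\mathbb E\,\langle (A+\lambda I)^{-1}J^*X,\,J^*X\rangle_{\mathcal H}=\sigma^2\,\text{trace}\Big((A+\lambda I)^{-1}\,\mathbb E\big[(J^*X)\otimes_{\mathcal H}(J^*X)\big]\Big).
\]
Since $(J^*X)\otimes_{\mathcal H}(J^*X)=J^*(X\otimes X)J$ and $\mathbb E[X\otimes X]=C$, the inner expectation is $J^*CJ=A$, so $\mathbb E\|\zeta_1\|_{\mathcal H}^2=\sigma^2\,\text{trace}\big(A(A+\lambda I)^{-1}\big)$. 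The operators $A=J^*CJ$ and $\Lambda=T^{\frac12}CT^{\frac12}$ have the same nonzero eigenvalues (both share the nonzero spectrum of $C^{\frac12}TC^{\frac12}$ since $T=JJ^*$; cf.\ Lemma~\ref{J T}), and $x\mapsto x/(x+\lambda)$ vanishes at $0$, so $\text{trace}\big(A(A+\lambda I)^{-1}\big)=\text{trace}\big(\Lambda(\Lambda+\lambda I)^{-1}\big)=\mathcal N(\lambda)$. Hence $\mathbb E\|\zeta\|_{\mathcal H}^2=\sigma^2\mathcal N(\lambda)/n$.

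\textbf{Step 3: Chebyshev.} For every $t>0$, Markov's inequality applied to $\|\zeta\|_{\mathcal H}^2$ gives
\[
\mathbb P\big(\|\zeta\|_{\mathcal H}>t\big)\le\frac{\mathbb E\|\zeta\|_{\mathcal H}^2}{t^2}=\frac{\sigma^2\mathcal N(\lambda)}{n\,t^2}.
\]
Choosing $t=\sqrt{\sigma^2\mathcal N(\lambda)/(n\delta)}$ makes the right-hand side equal $\delta$, so with probability at least $1-\delta$,
\[
\big\|(A+\lambda I)^{-\frac12}J^*(\hat{R}-\hat{C}_n\beta^*)\big\|_{\mathcal H}\le\sqrt{\frac{\sigma^2\mathcal N(\lambda)}{n\delta}},
\]
which is the assertion. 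The computation is essentially routine; the only steps that require a little care are the bookkeeping identity $(J^*X)\otimes_{\mathcal H}(J^*X)=J^*(X\otimes X)J$ (so that the second moment becomes a trace against $A$) and the spectral identity $\text{trace}(A(A+\lambda I)^{-1})=\mathcal N(\lambda)$, both consequences of $T=JJ^*$ and Lemma~\ref{J T}.
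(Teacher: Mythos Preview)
Your proof is correct. The paper itself does not supply a proof of this lemma; it merely cites \cite[Lemma 4.2]{gupta2023convergence}. Your argument---rewriting $\hat{R}-\hat{C}_n\beta^*$ as $\tfrac1n\sum_i\epsilon_iX_i$, computing the second moment via $\mathbb E[(J^*X)\otimes_{\mathcal H}(J^*X)]=J^*CJ=A$, identifying $\text{trace}(A(A+\lambda I)^{-1})$ with $\mathcal N(\lambda)$ through the shared nonzero spectrum of $A$ and $\Lambda$, and finishing with Chebyshev---is exactly the standard route and matches what one finds in the cited reference.
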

\begin{lemma}\cite[Lemma A.1]{gupta2023convergence}
\label{J T}
    For any bounded linear positive operator $\mathcal{A}:L^2(S)\to L^2(S)$, we have
\begin{equation*}
Jg_{\lambda}(J^*\mathcal{A}J)J^* = T^{1/2}g_{\lambda}(T^{\frac{1}{2}}\mathcal{A}T^{\frac{1}{2}})T^{1/2}.
\end{equation*}
\end{lemma}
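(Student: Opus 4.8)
The plan is to reduce everything to a single algebraic identity about powers, and then lift to a general filter function $g_\lambda$ by approximation. The conceptual point to keep in mind throughout is that $J^*\mathcal{A}J$ acts on $\mathcal{H}$ while $T^{\frac{1}{2}}\mathcal{A}T^{\frac{1}{2}}$ acts on $L^2(S)$, so there is no direct commutation relation to invoke verbatim; instead, after the sandwiching by $J,J^*$ (resp. by $T^{\frac{1}{2}},T^{\frac{1}{2}}$) both sides collapse onto the same operator on $L^2(S)$.

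First I would establish, for every integer $n\ge 0$,
\[
    J\,(J^*\mathcal{A}J)^n\,J^* \;=\; (T\mathcal{A})^n\,T \;=\; T^{\frac{1}{2}}\,(T^{\frac{1}{2}}\mathcal{A}T^{\frac{1}{2}})^n\,T^{\frac{1}{2}}.
\]
For the left-hand equality, write $(J^*\mathcal{A}J)^n$ as the alternating product $J^*\mathcal{A}J\cdot J^*\mathcal{A}J\cdots J^*\mathcal{A}J$ and regroup: each adjacent pair $J\,J^*$ — including the outermost one formed with the bordering $J$ and $J^*$ — equals $T=JJ^*$, leaving $n$ copies of $\mathcal{A}$ interlaced with $n{+}1$ copies of $T$. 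The right-hand equality is the identical computation using $T^{\frac{1}{2}}T^{\frac{1}{2}}=T$, which is legitimate because $T=JJ^*\ge 0$; the case $n=0$ is just $JJ^*=T$. By linearity this gives $J\,p(J^*\mathcal{A}J)\,J^* = T^{\frac{1}{2}}\,p(T^{\frac{1}{2}}\mathcal{A}T^{\frac{1}{2}})\,T^{\frac{1}{2}}$ for every polynomial $p$ — and note that this argument handles $p(0)\ne 0$ with no trouble, unlike a naive intertwining argument.

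To pass to a general $g_\lambda$, I would first observe that $J^*\mathcal{A}J$ and $T^{\frac{1}{2}}\mathcal{A}T^{\frac{1}{2}}$ are both bounded, positive and self-adjoint, and that their norms coincide: setting $B=\mathcal{A}^{\frac{1}{2}}J$ and $D=\mathcal{A}^{\frac{1}{2}}T^{\frac{1}{2}}$ one has $BB^*=\mathcal{A}^{\frac{1}{2}}T\mathcal{A}^{\frac{1}{2}}=DD^*$, whence $\|J^*\mathcal{A}J\|=\|B\|^2=\|D\|^2=\|T^{\frac{1}{2}}\mathcal{A}T^{\frac{1}{2}}\|=:\rho$, so the spectra of both operators lie in the common compact interval $[0,\rho]$. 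Choosing, by the Weierstrass theorem, polynomials $p_j\to g_\lambda$ uniformly on $[0,\rho]$, the continuous functional calculus gives $p_j(J^*\mathcal{A}J)\to g_\lambda(J^*\mathcal{A}J)$ and $p_j(T^{\frac{1}{2}}\mathcal{A}T^{\frac{1}{2}})\to g_\lambda(T^{\frac{1}{2}}\mathcal{A}T^{\frac{1}{2}})$ in operator norm; since $J$, $J^*$ and $T^{\frac{1}{2}}$ are bounded, passing to the limit in the polynomial identity yields the lemma. If $g_\lambda$ is assumed only bounded and Borel, one replaces uniform approximation by a uniformly bounded pointwise-convergent sequence and operator-norm convergence by strong-operator convergence, which still suffices because $J$, $J^*$, $T^{\frac{1}{2}}$ are bounded.

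I do not expect a genuine obstacle: the only things to be careful about are resisting the temptation to apply a commutation identity across the two different Hilbert spaces, and making sure $g_\lambda$ is continuous (or at least bounded Borel) on a compact interval containing both spectra — which holds for the Tikhonov filter $g_\lambda(x)=(x+\lambda)^{-1}$ and every regularization filter used in this paper.
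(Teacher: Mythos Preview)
Your argument is correct. The present paper does not supply its own proof of this lemma---it is simply cited from \cite{gupta2023convergence}---so there is nothing here to compare against beyond noting that your approach (monomials via $JJ^*=T=T^{1/2}T^{1/2}$, extension to polynomials by linearity, then passage to a general continuous $g_\lambda$ via Weierstrass approximation and the continuous functional calculus after identifying a common spectral interval $[0,\rho]$ for $J^*\mathcal{A}J$ and $T^{1/2}\mathcal{A}T^{1/2}$) is the standard and natural way to establish such an identity.
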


\begin{lemma}\cite[Lemma A.10]{gupta2024optimal}
\label{seriessum}
    For $\alpha >1$, $\beta >1,$ and $q \geq \frac{\alpha}{\beta}$, we have 
    $$\sum_{i\in \mathbb{N}}\frac{i^{-\alpha}}{(i^{-\beta}+\lambda)^q} \lesssim \lambda^{-\frac{1+\beta q -\alpha}{\beta}}, ~~ \forall ~\lambda >0.$$
\end{lemma}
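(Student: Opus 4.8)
The plan is to estimate the series by the standard device of splitting the index set at the balance point $i_0 \asymp \lambda^{-1/\beta}$, where the two summands $i^{-\beta}$ and $\lambda$ in the denominator are comparable. First I would dispose of the easy regime $\lambda \geq 1$: here one has the crude bound $\sum_{i} i^{-\alpha}(i^{-\beta}+\lambda)^{-q} \leq \lambda^{-q}\sum_i i^{-\alpha} \lesssim \lambda^{-q}$ (the last sum converges since $\alpha>1$), and because $\alpha>1$ forces $q\beta \geq 1+\beta q - \alpha$, i.e. $q \geq (1+\beta q-\alpha)/\beta$, for $\lambda \geq 1$ we get $\lambda^{-q} \leq \lambda^{-(1+\beta q-\alpha)/\beta}$, which is exactly the asserted bound. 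So it remains to treat $0<\lambda<1$, where $i_0 := \lambda^{-1/\beta}>1$.

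For the low-index block $1\leq i \leq i_0$ one has $i^{-\beta}\geq \lambda$, hence $(i^{-\beta}+\lambda)^q \geq i^{-\beta q}$ and therefore $i^{-\alpha}(i^{-\beta}+\lambda)^{-q} \leq i^{\beta q - \alpha}$. Since the hypothesis $q\geq \alpha/\beta$ makes the exponent $\beta q - \alpha$ nonnegative, the elementary estimate $\sum_{i=1}^{N} i^{p}\lesssim N^{p+1}$ (valid for $p\geq 0$) gives $\sum_{1\leq i\leq i_0} i^{\beta q-\alpha} \lesssim i_0^{\beta q-\alpha+1} = \lambda^{-(\beta q-\alpha+1)/\beta}$. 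For the tail $i> i_0$ one has $i^{-\beta}+\lambda \geq \lambda$, hence $(i^{-\beta}+\lambda)^q \geq \lambda^q$ and $i^{-\alpha}(i^{-\beta}+\lambda)^{-q}\leq \lambda^{-q} i^{-\alpha}$; since $\alpha>1$, the tail of the $p$-series satisfies $\sum_{i>i_0} i^{-\alpha} \lesssim i_0^{1-\alpha}$, so this block is $\lesssim \lambda^{-q} i_0^{1-\alpha} = \lambda^{-q}\lambda^{(\alpha-1)/\beta} = \lambda^{-(\beta q-\alpha+1)/\beta}$, the same power of $\lambda$. Adding the two blocks yields $\sum_{i\in\mathbb{N}} i^{-\alpha}(i^{-\beta}+\lambda)^{-q} \lesssim \lambda^{-(1+\beta q-\alpha)/\beta}$, as claimed.

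I do not expect a genuine obstacle here; the lemma is an elementary asymptotic estimate. The only points requiring a little care are bookkeeping rather than difficulty: the threshold $\lambda^{-1/\beta}$ need not be an integer, so one works with $\lfloor \lambda^{-1/\beta}\rfloor$ (this does not affect any of the exponents), one must correctly invoke $\sum_{i\leq N} i^{p}\asymp N^{p+1}$ only in the range $p\geq 0$ (which is where the hypothesis $q\geq\alpha/\beta$ is used) and $\sum_{i>N} i^{-\alpha}\asymp N^{1-\alpha}$ only for $\alpha>1$, and one must check that the two regimes $\lambda<1$ and $\lambda\geq1$ both reproduce the single exponent $-(1+\beta q-\alpha)/\beta$. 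Alternatively, and essentially equivalently, one could bound the sum by the integral $\int_1^\infty x^{-\alpha}(x^{-\beta}+\lambda)^{-q}\,dx$, substitute $u=x^{-\beta}$ to reduce it to $\tfrac1\beta\int_0^1 u^{(\alpha-1)/\beta-1}(u+\lambda)^{-q}\,du$, and split this integral at $u=\lambda$; this is the variant I would fall back on if a cleaner write-up is wanted.
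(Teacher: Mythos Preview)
Your argument is correct. The paper itself does not supply a proof of this lemma; it merely quotes it from \cite[Lemma A.10]{gupta2024optimal}, so there is nothing to compare against beyond noting that your splitting-at-$i_0\asymp\lambda^{-1/\beta}$ argument is the standard way such an estimate is established and would serve perfectly well as a self-contained proof.
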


\noindent

\section{Numerical Experiment}
\label{numerical}
In this section, we illustrate the proposed Nyström subsampling method with an example. Let $S = [0,1]$ and consider the reproducing kernel Hilbert space:
$$H_{K} := \left\{f(x) = \sum_{k \geq 1} f_{k} \sqrt{2} \cos(k \pi x) ~ |~ \sum_{k \geq 1} k^4 f_{k}^2 < \infty \right\} $$
with inner product 
\begin{equation*}
    \langle f, g  \rangle_{H_{k}} := \sum_{k\geq 1} (k \pi)^4 f_{k} g_{k}.
\end{equation*}
It can be seen from \cite{tonyyuan2012minimax} that the associated reproducing kernel of $H_k$ is given as:
\begin{equation*}
    K(s,t) = \sum_{k \geq 1} \frac{2}{(k \pi)^4} \cos(k \pi s) \cos(k \pi t) = -\frac{1}{3}\left(B_{4}\left(\frac{s+t}{2}\right)+ B_{4}\left(\frac{|s-t|}{2}\right)\right),
\end{equation*}
where $B_{4}$ is the Bernoulli polynomial of degree $4$.
We consider the Brownian motion covariance kernel 
\begin{equation*}
    \begin{split}
        C(s,t ) = & \sum_{n \geq 1} \frac{2}{(n-\frac{1}{2})^2 \pi^2} \sin\left(\left(n-\frac{1}{2}\right)\pi s\right) \sin\left(\left(n-\frac{1}{2}\right)\pi t\right)\\
        = & E_{1}\left(\frac{s+t}{2}\right) - E_{1}\left(\frac{|s-t|}{2}\right) = \min\{s,t\},
    \end{split}
\end{equation*}
where $E_{1}(x) = x- \frac{1}{2}$ is the Euler polynomial of degree $1$. We generate $650$ data points from $X_{i}(t) = \sum_{i = 1}^{500} \frac{\sqrt{2}}{(i-\frac{1}{2})\pi} Z_{i} \sin((i-\frac{1}{2})\pi t)$, $\beta^*(t) = \sum_{k \geq 1} 4\sqrt{2}(-1)^{k-1}k^{-2} \cos(k\pi t) = -\sqrt{2} \pi^2 (E_{1}(t)+B_{2}(t))$ and the model $(\ref{model})$, where $B_{2}(x) = x^2 - x + \frac{1}{6}$ is the Bernoulli polynomial of degree 2 and the coefficients $Z_{i}$ follows a normal distribution with zero mean and variance 1. We have chosen the error term to follow a normal distribution with mean $0$ and $\sigma = \sqrt{0.5}$. We use initial $550$ data points to construct our estimator and remaining $100$ points to measure the prediction error using the root mean squared error (RMSE) which is defined as $\left(\sum_{i=1}^{100}\frac{(\langle\beta^*, X_{500+i}\rangle_{L^2}-\langle \hat{\beta}, X_{500+i}\rangle_{L^2})^2}{100}\right)^{\frac{1}{2}}$. For the illustration, we constructed $\hat{\beta}$ with different values of the regularization parameter $\lambda \in [10^{-07}, 10^{-04}]$ and subsampling size $m \in [10,240]$ and plotted the RMSE averaged over 100 repetitions in Figure \ref{fig:mean_error_heatmap}.\\


\begin{figure}[h!]
    \centering
    \includegraphics[width=0.8\textwidth]{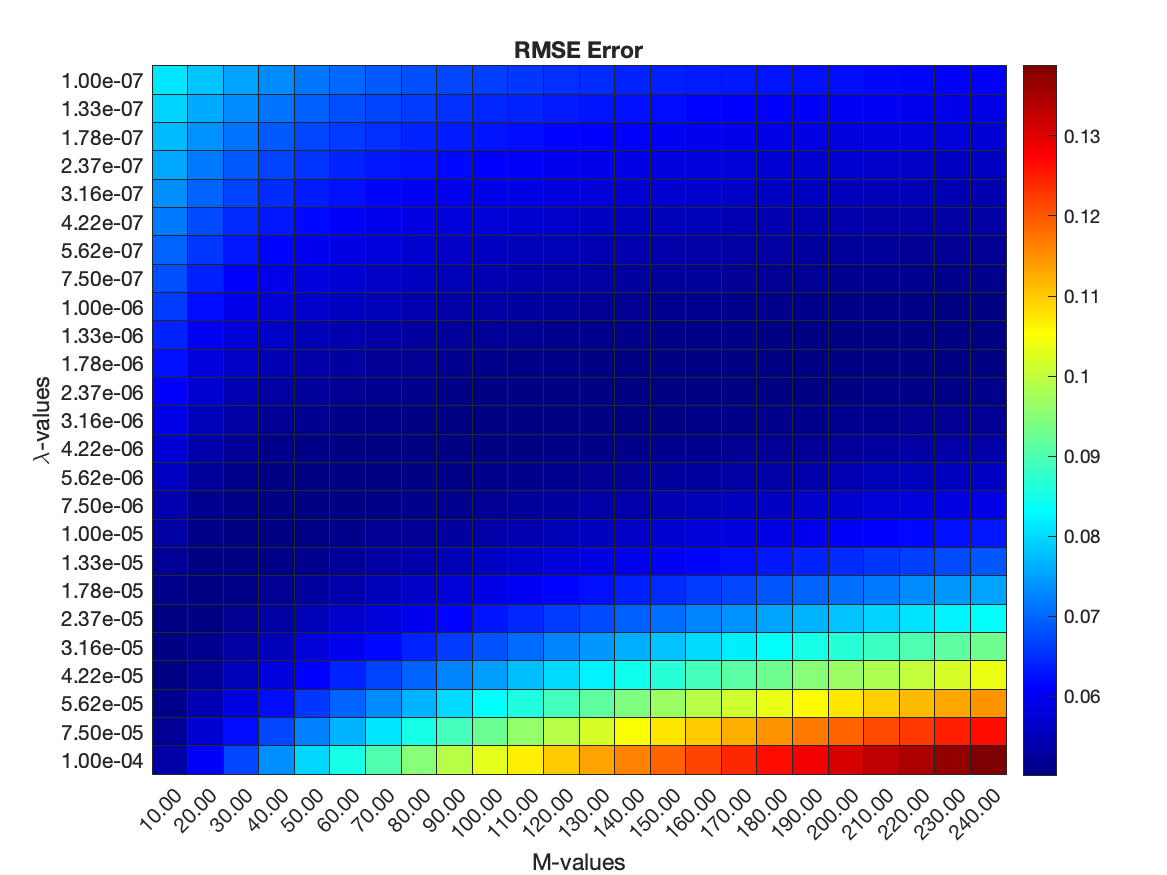}
    \caption{RMSE error associated to 25 $\times$ 25 grids of values for $m~(x\text{-axis })$ and $\lambda~(y\text{-axis})$.} 
    \label{fig:mean_error_heatmap}
\end{figure}
The root mean square error (RMSE) for the kernel method, applied without any subsampling, is found to be 0.06. As observed in Figure~\ref{fig:mean_error_heatmap}, the same level of accuracy (i.e., an RMSE of 0.06) can be achieved for both small and large values of the subsampling parameter $m$, as long as an appropriate regularization parameter is selected. This demonstrates that, with proper regularization, it is possible to maintain high accuracy across different choices of $m$, avoiding significant loss of performance even in cases where subsampling is applied.


\begin{remark}[Concluding Remarks]
We conclude that by using the Nyström subsampling technique in the RKHS framework for the FLR model, we can reduce the computational complexity of the Kernel methods from $O(n^3)$ to $O(m^2n)$, where $n$ is the size of the dataset and $m$ is the size of the subsampled dataset. Compared to the convergence rates of kernel method for the FLR model \cite{tonyyuan2012minimax, ZhangFaster2020, balasubramanian2024functional}, Theorem \ref{mainresult} ensures that while reducing the computational complexity with the help of Nyström subsampling simultaneously we can maintain the accuracy of the kernel methods. Moreover, the convergence rates achieved in Theorem \ref{mainresult} are optimal as they match with the existing lower bounds  \cite[Theorem $2.2$]{ZhangFaster2020}.
\end{remark}
\section*{Acknowledgements}
S. Sivananthan acknowledges the Anusandhan National Research Foundation, Government of India, for the financial support through project no. MTR/2022/000383.
\bibliographystyle{acm}
\bibliography{123}
\end{document}